\newcommand{\C}{\mathbb{C}}
\theoremstyle{plain}
\newtheorem{theorem}{Theorem}[section]
\newtheorem{proposition}[theorem]{Proposition}
\newtheorem{corollary}[theorem]{Corollary}
\newtheorem{lemma}[theorem]{Lemma}
\theoremstyle{definition}
\newtheorem{definition}[theorem]{Definition}
\theoremstyle{remark}
\newtheorem{remark}[theorem]{Remark}
\def\oM{\overline{\mathcal{M}}}
\newcommand{\Z}{\mathbb{Z}}
\newcommand{\cP}{\mathcal{P}}
\newcommand{\cM}{\mathcal{M}}
\newcommand{\cO}{\mathcal{O}}
\newcommand{\res}{\mathop{\rm res}}
\newcommand{\bP}{\mathbb{P}}
\newcommand{\bC}{\mathbb{C}}
\newcommand{\bQ}{\mathbb{Q}}
\newcommand{\SRT}{\mathrm{SRT}}
\newcommand{\restr}[2]{\mathop{\big\lfloor_{{#1}\to {#2}}}}
\newcommand{\set}[1]{\llbracket {#1} \rrbracket}
\DeclareFontFamily{U}{rcjhbltx}{}
\DeclareFontShape{U}{rcjhbltx}{m}{n}{<->rcjhbltx}{}
\DeclareSymbolFont{hebrewletters}{U}{rcjhbltx}{m}{n}
\DeclareMathSymbol{\shin}{\mathord}{hebrewletters}{152}
\DeclareMathOperator{\arcsinh}{arcsinh}
\title{A new spin on polynomial relations among kappa classes}
\author[A.~Alexandrov]{A.~Alexandrov}
\address{A.~A.: Center for Geometry and Physics, Institute for Basic Science (IBS), Pohang 37673, Korea
}
\email{alex@ibs.re.kr}
\author[B.~Bychkov]{B.~Bychkov}
\address{B.~B.: Department of Mathematics, University of Haifa, Mount Carmel, 3498838, Haifa, Israel}
\email{bbychkov@hse.ru}
\author[P.~Dunin-Barkowski]{P.~Dunin-Barkowski}
\address{P.~D.-B.: Faculty of Mathematics, HSE University, Usacheva 6, 119048 Moscow, Russia; HSE--Skoltech International Laboratory of Representation Theory and Mathematical Physics, Skoltech, Bolshoy Boulevard 30 bld. 1, 121205 Moscow, Russia; and NRC “Kurchatov Institute” -- ITEP, 117218 Moscow, Russia}
\email{ptdunin@hse.ru}
\author[M.~Kazarian]{M.~Kazarian}
\address{M.~K.: Faculty of Mathematics, HSE University, Usacheva 6, 119048 Moscow, Russia; and Igor Krichever Center for Advanced Studies, Skoltech, Bolshoy Boulevard 30 bld. 1, 121205 Moscow, Russia}
\email{kazarian@mccme.ru}
\author[S.~Shadrin]{S.~Shadrin}
\address{S.~S.: Korteweg-de Vries Institute for Mathematics, University of Amsterdam, Postbus 94248, 1090GE Amsterdam, The Netherlands}
\email{S.Shadrin@uva.nl}	
\begin{document}
	
\begin{abstract} We prove a recent conjecture of the fourth named author with P. Norbury that states a system of universal polynomial relations among the kappa classes on the moduli spaces of algebraic curves. The proof involves localization and materialization analysis of the spin Gromov-Witten theory of the projective line and is dictated by $\mathbb{Z}_2$-equivariant topological recursion.
\end{abstract}
	
\maketitle
	
\setcounter{tocdepth}{3}
\tableofcontents

\section{Introduction}

We prove a relation among the $\kappa$-classes on the moduli spaces of curves $\oM_{g,n}$, conjectured in~\cite{KN-conjecture}. Let $\oM_{g,n}$ be the moduli space of stable curves of genus $g$ with $n$ marked points, and let $R^*(\oM_{g,n})$ denote its tautological ring.  Define the constants $s_i$, $i=1,2,\dots$ by the formula
\begin{align}\label{eq:J-first}
	\exp(-\textstyle \sum_{i=1}^\infty s_i t^i) = \sum_{i=0}^\infty (-1)^i i! t^i,
\end{align}
where $t$ is a formal variable. Consider the class
\begin{align}\label{eq:J-second}
	\mathbb{J}=1+\sum_{i=1}^{\infty} J_i = \exp(\textstyle \sum_{i=1}^\infty s_i \kappa_i),
\end{align}
where $J_i\in R^i(\oM_{g,n})$ and $\kappa_i$, $i=1,2,\dots$, are the Miller-Morita-Mumford $\kappa$-classes. Equivalently, this class can be defined as follows:
\begin{align}\label{eq:J-class}
	\mathbb{J}=1+\sum_{m=1}^{\infty} \frac{1}{m!} \sum_{a_1,\dots,a_m=1}^\infty \kappa_{a_1,\dots,a_m} \prod_{i=1}^m (-1)^{a_i-1} a_i!,
\end{align}
where $\kappa_{a_1,\dots,a_m}$ are the multi-index $\kappa$-classes obtained from the push-forwards of the $\psi$-classes as $\pi_*(\prod_{i=1}^m \psi_{n+i}^{a_i+1})$, where $\pi \colon \oM_{g,n+m}\to \oM_{g,n}$ is the forgetful map.

The main result of this paper is the following
\begin{theorem} \label{thm:main} We have $J_p=0$ in $R^*(\oM_{g,n})$ for $p>2g-2+n$. Moreover, $J_{2g-2+n}=0$ for $n\geq 2$.
\end{theorem}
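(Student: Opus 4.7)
The plan is to realize $\mathbb{J}$ as the tautological class materialized by the vertex contributions in a virtual localization formula for the $\mathbb{Z}_2$-equivariant spin Gromov--Witten theory of $\bP^1$, and to read off the vanishing $J_p=0$ for $p>2g-2+n$ from a stability bound built into the $\mathbb{Z}_2$-equivariant topological recursion governing those invariants.

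First, I would set up an appropriate spin GW invariant of $\bP^1$ with a $\mathbb{C}^*$-action fixing $0$ and $\infty$ and an involution swapping the two fixed points. Applying virtual localization expresses these invariants as sums over decorated graphs whose vertices are moduli factors $\oM_{g',n'}$ carrying $\psi$- and $\kappa$-insertions and whose edges carry equivariant weights dictated by the spin structure. A resummation of the edge contributions adjacent to a fixed vertex should identify the local vertex class with $\mathbb{J}$ exactly: the divergent series $\sum_i (-1)^i i! t^i$ in \eqref{eq:J-first} should arise as (the asymptotic expansion of) the relevant edge integral, producing the coefficients $s_i$ of \eqref{eq:J-second} on the nose.

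Second, I would compute the very same spin GW invariants by $\mathbb{Z}_2$-equivariant topological recursion on the natural spectral curve of the problem. Standard materialization arguments, in the style of Eynard's formula, express the TR correlators as intersection numbers against an explicit cohomological field theory, and Eynard's polynomiality forces the total $\kappa$-degree of the tautological classes that can appear on $\oM_{g,n}$ to be bounded by $2g-2+n$. Equating the two presentations of the invariants and extracting the piece supported on each moduli factor then gives an identity that forces $J_p=0$ in $R^*(\oM_{g,n})$ for all $p>2g-2+n$. The sharper vanishing $J_{2g-2+n}=0$ for $n\geq 2$ should follow from a refined degree/parity count on the TR side, the presence of at least two arguments in the correlator improving the bound by one.

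The main obstacle, in my opinion, is the explicit matching in the first step: one must show that the edge-to-vertex resummation in the spin localization really produces the specific coefficients $s_i$ dictated by \eqref{eq:J-first}, and one must justify the formal/divergent-series manipulations at the level of classes in the tautological ring (converting the Borel-type generating function on the analytic side into genuine relations on $\oM_{g,n}$). A secondary difficulty is the sharper bound for $n\geq 2$, which requires isolating a structural feature of the $\mathbb{Z}_2$-equivariant spectral curve --- a symmetry or an explicit vanishing of a leading coefficient of an invariant polynomial --- that cuts the top-degree contribution by one additional unit.
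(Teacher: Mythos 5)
Your first step is essentially the paper's: the localization formula for the $\bC^*$-equivariant class $C_{g,n,d}(T)=\prod_i ev_i^*(\mathbf{0})\cdot c_{g-1+d}(R^1\pi_*f^*\cO(-1))\cdot[\oM_{g,n}(\bP^1,d)]^{\mathrm{vir}}$ produces star rooted trees, and after pushing forward to $\oM_{g,n}$ (materialization via the string and dilaton equations and the Lambert-type resummation $X=\tfrac QT e^{-Q/T}$) the root vertex carries exactly $(T-Q)^{2g_0-2+n+|E|}\exp(\sum_i s_i\kappa_i)$, whose coefficients reproduce~\eqref{eq:J-first} in the limit. So the ``main obstacle'' you single out is resolved affirmatively in the paper, and the divergent series never needs Borel-type justification: everything is a finite Laurent-polynomial identity in $T$ and $S=T-Q$.

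The gap is in your second step. The logical direction is reversed: in the paper the $\mathbb{Z}_2$-equivariant topological recursion statement (Theorem~\ref{thm:TR-spin}) is a \emph{consequence} of Theorems~\ref{thm:main} and~\ref{thm:C=J}, not an input, so deriving the vanishing from the TR side as you propose would be circular in this setup. More importantly, equating two computations of the \emph{numbers} $\langle\tau_{k_1}\cdots\tau_{k_n}\rangle$ can at best show that $J_p$ pairs to zero against certain $\psi$-monomials, which does not give $J_p=0$ as a class in $R^p(\oM_{g,n})$: that pairing is far from non-degenerate, and no ``Eynard polynomiality'' bound of the form you invoke holds for arbitrary tautological classes (classes of degree up to $3g-3+n$ are generally nonzero). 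The actual mechanism is class-level and lives entirely on the localization side: $C^{(p)}_{g,n}(T,Q)$ is a genuine polynomial in $T$ and $Q$ of homogeneous degree $2g-2+n-p$ (because $C_{g,n,d}(T)\in A^{\bC^*}_{g-1+d}$ over $\bQ[T]$), while the graph sum presents it as a Laurent polynomial in $T$ and $S$; hence the total coefficient of every monomial $T^0S^d$ with $d<0$ must vanish, and Lemma~\ref{lem:T=0contribution} identifies that coefficient with $J_{2g-2+n-d}$ (plus $\lambda$-class corrections only for $n\le1$), giving $J_p=0$ for $p>2g-2+n$. The borderline case $J_{2g-2+n}=0$ for $n\ge2$ is not a parity or two-point refinement on the TR side: it follows because $C_{g,n}(T,Q)$ has no $Q^0$ term (the degree sum starts at $d=1$), so the constant monomial $T^0S^0$ must also cancel; for $n\le1$ the exceptional summands contribute $(-1)^{g}\lambda_g\lambda_{g-1}$, resp.\ $(-1)^{g-1}\lambda_g\lambda_{g-2}$, which is precisely why the vanishing fails there and Theorem~\ref{thm:n10} holds instead.
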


This statement was conjectured in~\cite[Conjecture 1, Part 2]{KN-conjecture}. Note that the first part of this conjecture concerns different polynomial relations in $\kappa$-classes and is proven in~\cite{CGG}. In particular, in~\cite[Section 1.1]{KN-conjecture} the following explicit examples of $J_i$ for small $i$ are presented:
\begin{align}
	& J_1 = \kappa_1; \qquad J_2 = \frac 12 (\kappa_1^2-3\kappa_2); \qquad J_3 = \frac{1}{6} (\kappa_1^3 -9\kappa_1\kappa_2 + 26 \kappa_3);
	\\ \notag
	& J_4 = \frac{1}{24} (\kappa_1^4 - 18 \kappa_1^2\kappa_2+27\kappa_2^2 + 104 \kappa_1\kappa_3-426 \kappa_4).
\end{align}
Theorem~\ref{thm:main}, in particular, implies that $J_4$ nontrivially vanishes on $\oM_{1,4}$,   $\oM_{2,1}$, and $\oM_{2,2}$.

Furthermore, let $\lambda_1,\dots,\lambda_g$ be the Chern classes of the Hodge bundle on $\oM_{g,n}$. We prove the following expression for $J_{2g-2+n}$ in the exceptional cases $n\le1$:
\begin{theorem}\label{thm:n10} We have $J_{2g-1}=(-1)^{g-1}\lambda_g\lambda_{g-1}$ in $R^*(\oM_{g,1})$ and $J_{2g-2}=(-1)^{g-2}\lambda_g\lambda_{g-2}$ in $R^*(\oM_{g,0})$.
\end{theorem}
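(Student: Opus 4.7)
The plan is to extend the proof of Theorem~\ref{thm:main} by tracking the localization contributions that vanish for dimensional reasons when $n\ge 2$ but survive precisely when $n\le 1$. Recall that Theorem~\ref{thm:main} is obtained by extracting the class $\mathbb{J}$ from an identity produced by applying $\mathbb{Z}_2$-equivariant topological recursion to the spin Gromov--Witten theory of $\mathbb{P}^1$; the top-degree part of this identity vanishes once the number of markings is large enough to activate the relevant dimension count at every fixed-point contribution in the localization dual graph. For $n=0$ and $n=1$, vertices of genus $g$ carrying zero or one markings escape this count, so the analogous extraction yields a residual class rather than zero. The first step is therefore to isolate the dual graphs that contribute non-trivially in these two cases.

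The second step is to express the residual contributions in terms of Hodge classes. Each vertex factor in the localization formula is controlled by Mumford's formula, which packages $\kappa$- and $\psi$-class combinations into components of the Chern character of the Hodge bundle. The constants $s_i$ from~\eqref{eq:J-first} are chosen so that $\exp(\sum_i s_i \kappa_i)$ matches, up to Hodge-class corrections, the universal expression coming from Mumford. After this matching, the residual class repackages as a polynomial in the $\lambda_i$; codimension counting, the rank bound $\lambda_i=0$ for $i>g$, and compatibility with Theorem~\ref{thm:main} applied to larger $n$ should then force the surviving monomials to be $\lambda_g\lambda_{g-1}$ on $\oM_{g,1}$ and $\lambda_g\lambda_{g-2}$ on $\oM_{g,0}$. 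The signs $(-1)^{g-1}$ and $(-1)^{g-2}$ are tracked through the factor $\sum_i(-1)^i i!\, t^i$ in~\eqref{eq:J-first}.

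As a consistency check one can use the forgetful map $\pi\colon\oM_{g,1}\to\oM_{g,0}$ together with the relations $\kappa_i(\oM_{g,n+1})=\pi^*\kappa_i(\oM_{g,n})+\psi_{n+1}^i$ and $\lambda_i(\oM_{g,1})=\pi^*\lambda_i(\oM_{g,0})$ to get
\begin{equation*}
\mathbb{J}\,\big|_{\oM_{g,1}} \;=\; \pi^*\mathbb{J}\,\big|_{\oM_{g,0}} \cdot \exp\bigl(\textstyle\sum_{i\ge 1} s_i\, \psi_1^i\bigr),
\end{equation*}
and then push the appropriate codimension component down to $\oM_{g,0}$ using $\pi_*\psi_1^{k+1}=\kappa_k$; this should exchange the two stated formulas. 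I expect the main obstacle to be the second step: the $\mathbb{Z}_2$-equivariant TR identity will generically produce many Hodge-class monomials of the correct codimension, and cutting the list down to exactly $\lambda_g\lambda_{g-1}$ and $\lambda_g\lambda_{g-2}$ with the stated coefficients and signs will require a delicate cancellation argument mirroring the one that proves Theorem~\ref{thm:main} but now operating one codimension lower than the top-dimensional vanishing.
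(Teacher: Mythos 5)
Your high-level strategy matches the paper's: Theorem~\ref{thm:n10} is indeed obtained from the same identity that proves Theorem~\ref{thm:main}, by observing that for $n\le 1$ the pushforward of the localization formula acquires exceptional terms (from root vertices that become unstable) which are absent for $n\ge 2$, and that Mumford's formula is the tool that converts the $n=0$ residue into $\lambda$-classes. However, the proposal has a genuine gap exactly where you flag it: the entire content of the theorem is the identification of the residual class as $(-1)^{g}\lambda_g\lambda_{g-1}$ (resp.\ $(-1)^{g-1}\lambda_g\lambda_{g-2}$) with no other monomials, and ``codimension counting, the rank bound $\lambda_i=0$ for $i>g$, and compatibility with Theorem~\ref{thm:main}'' do not force this. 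In the paper the selection is done by a concrete bookkeeping of powers of $T$: one extracts the coefficient of $T^0S^0$ (with $S=T-Q$) from the materialized class $C_{g,n}(T,Q)$ of Proposition~\ref{prop:materialization}, and the vanishing follows not from a dimension count on dual graphs but from the elementary fact that $C_{g,n}(T,Q)$ has no $Q^0$ term while $T^0S^0$ is the unique constant monomial. For $n=1$ the exceptional term contributes to $[T^0]$ only through $l=0$, $a=g-1$ via the $-1/T$ part of $V_1$, which pins down $(-1)^g\lambda_g\lambda_{g-1}$ at once; for $n=0$ one needs the explicit singular parts of $V_{k,l}$ in $T$ (Lemma~\ref{lem:T-regular}), which assemble into $\sum_k\frac{B_{2k}}{2k}\mu_{2k-1}T^{-(2k-1)}$, and only then does Mumford's formula (applied once, in the form $\bigl(\sum_k\frac{B_{2k}}{2k}\mu_{2k-1}t^{2k-1}\bigr)\sum_a\lambda_at^a=\sum_a a\lambda_at^a$) collapse the answer to $-(-1)^g\lambda_g\lambda_{g-2}$.

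Two smaller inaccuracies: the constants $s_i$ of~\eqref{eq:J-first} are not ``chosen to match Mumford''; they arise as the $T\to 0$ limit of the generating series $1+\sum_k(T-Q)V_{k+1}t^k$ coming from the string/dilaton manipulation of the edge contributions, and Mumford's formula plays no role in the definition of $\mathbb{J}$. Also, your forgetful-map consistency check is a reasonable sanity test but does not ``exchange the two stated formulas'': pushing $J_{2g-1}|_{\oM_{g,1}}=\sum_{a+b=2g-1}\pi^*J_a\cdot c_b\psi_1^b$ down to $\oM_{g,0}$ produces $\sum_a c_{2g-1-a}\,J_a\,\kappa_{2g-2-a}$, which mixes all the $J_a$ and does not isolate $J_{2g-2}$ without further input.
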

This statement was conjectured in~\cite[Conjecture 2]{KN-conjecture}.

The methods of the proof come from the analysis of the spin Gromov-Witten theory of $(\bP^1,\cO(-1))$ along the lines of~\cite{giacchetto2024spingromovwittenhurwitzcorrespondencemathbbp1} via localization and materialization, and the arguments that we use come from a statement that relates the stationary sector of the spin Gromov-Witten theory of $(\bP^1,\cO(-1))$ to topological recursion~\cite{EO-TR}, more precisely, a $\mathbb{Z}_2$-equivariant version of topological recursion in the sense of~\cite{GKL}. To this end, we have the following statements.

Firstly, we connect the classes $J_p$ to the stationary sector of the spin Gromov-Witten theory of $(\bP^1,\cO(-1))$. We refer the reader to the basic setup of this theory to~\cite{giacchetto2024spingromovwittenhurwitzcorrespondencemathbbp1} that matches the notation that we use here, as well as~\cite{KiemLi}. Let $[\oM_{g,n}(\bP^1,d)]^{\mathrm{loc},\cO(-1)}\in A_{g-1+d+n}(\oM_{g,n}(\bP^1,d))$, $d\geq 0$, be the corresponding localized virtual cycle and consider the class $C_{g,n,d}$ defined as
\begin{align}\label{eq:Cgnd-definition}
	C_{g,n,d} \coloneqq [\oM_{g,n}(\bP^1,d)]^{\mathrm{loc},\cO(-1)} \cdot \prod_{i=1}^n ev_i^*([\mathrm{pt}]).
\end{align}
Let $p\colon \oM_{g,n}(\bP^1,d) \to \oM_{g,n}$, $2g-2+n>0$, be the natural forgetful morphism. We have:
\begin{theorem} \label{thm:C=J} For any $g\geq 0$, $n\geq 0$, $2g-2+n>0$, $d\geq 0$ the Poincar\'e dual of $(-1)^{g+d+1} p_*C_{g,n,d}$ is equal to $J_{2g-2+n-d}$ (in $R^*(\oM_{g,n})$).
\end{theorem}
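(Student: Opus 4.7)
The strategy is $\C^*$-virtual localization on $\oM_{g,n}(\bP^1, d)$ followed by pushforward analysis. I would use the standard torus action on $\bP^1$ with fixed points $0, \infty$ together with a suitable $\C^*$-linearization of $\cO(-1)$. Because $\prod_{i=1}^n ev_i^*[\mathrm{pt}]$ is pulled back from a single torus-fixed point of $\bP^1$, I may take this point to be $0$, so that only $\C^*$-fixed loci whose $n$ marked points all sit over $0$ contribute to $C_{g,n,d}$. The Graber--Pandharipande virtual localization formula, combined with the materialization analysis for $(\bP^1, \cO(-1))$ developed in~\cite{giacchetto2024spingromovwittenhurwitzcorrespondencemathbbp1}, expresses $C_{g,n,d}$ as a sum over decorated bipartite graphs $\Gamma$: a distinguished $0$-vertex carries a contracted stable curve of genus $g_0$ with all $n$ marked points and $k$ half-edges; several $\infty$-vertices carry contracted stable curves of genera $g_1, \dots, g_m$; and edges represent fully ramified rational $\bP^1$-covers of degrees $d_1, \dots, d_k$ with $\sum d_j = d$.

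Next I would factorize the contribution of $\Gamma$: at each edge, the $\cO(-1)$ linearization produces a signed rational factor depending on $d_j$; at each $\infty$-vertex of genus $g_v$ with $k_v$ attachment half-edges, the localized Euler class contributes a Hodge integral over $\oM_{g_v, k_v}$; and at the $0$-vertex, one obtains a tautological class on $\oM_{g_0, n+k}$ carrying $\psi$-classes at all $k$ half-edges. Applying the forgetful pushforward $p_*$ and using the identity $\kappa_{a_1,\dots, a_m} = \pi_*(\prod_i \psi^{a_i+1})$ iteratively at the $0$-vertex half-edges converts the $0$-vertex data into multi-index $\kappa$-classes on $\oM_{g,n}$. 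Summing over all graphs $\Gamma$ organizes the $\infty$-vertex and edge contributions into a numerical generating series in a formal parameter $t$ associated to each half-edge of the $0$-vertex, while the remaining $0$-vertex data assemble into the combinatorial exponential of~\eqref{eq:J-class}.

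The heart of the argument is to show that the resulting $\infty$-plus-edge numerical generating series coincides with $\sum_{i=0}^\infty (-1)^i i! t^i$, which by~\eqref{eq:J-first} equals $\exp(-\sum_i s_i t^i)$; this is precisely the input that makes the combinatorial sum collapse to $\exp(\sum_i s_i \kappa_i) = \mathbb{J}$, whose component in codimension $2g-2+n-d$ is $J_{2g-2+n-d}$. The overall sign $(-1)^{g+d+1}$ then emerges from the parity of equivariant weights and the virtual dimension shift. The principal obstacle is the explicit evaluation of this $\infty$-vertex Hodge-integral generating series, together with the careful bookkeeping of signs, graph automorphisms, unstable components, and edge-degree symmetries in the localization sum. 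This is where the spin Gromov--Witten framework of~\cite{giacchetto2024spingromovwittenhurwitzcorrespondencemathbbp1} and the $\mathbb{Z}_2$-equivariant topological recursion of~\cite{GKL} provide the decisive structural tools for closing the identification.
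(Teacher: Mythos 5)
Your strategy --- virtual localization over star-shaped fixed loci, materialization, pushforward, and matching a generating series against \eqref{eq:J-first} --- is indeed the one the paper follows, but two points that carry the real weight of the argument are missing or misplaced. The first concerns where the series $\sum_i(-1)^i i!\,t^i$ actually comes from. Localization computes the \emph{equivariant} class $C_{g,n,d}(T)$, whose individual graph contributions are singular in the equivariant parameter $T$ at $T=0$; the non-equivariant class appearing in the theorem is the limit $T\to0$, which is only visible after the whole sum is reorganized. After the string/dilaton resummation, each positive-genus $\infty$-vertex contributes Hodge integrals weighted by $T^{g_v-a}\lambda_{g_v}\lambda_a$ with $a\le g_v-1$, hence with a strictly positive power of $T$ (see \eqref{eq:edge-contribution}); these therefore contribute \emph{nothing} in the non-equivariant limit, rather than assembling into your series, and the only surviving graph is the one-vertex graph dressed by genus-zero unstable tails. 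It is the genus-zero edge series $\sum_{d\ge1}(Q/T)^d\tfrac{d^{d-1}}{d!}(d/T)^k$, resummed through the Lambert function $X=\tfrac{Q}{T}e^{-Q/T}$ into the functions $V_k$ of \eqref{eq:Vk}, whose specialization $(T-Q)V_{k+1}\big|_{T=0}=k!/Q^k$ reproduces \eqref{eq:J-first} after the rescaling $t\mapsto -t/Q$. Moreover, since the theorem is a statement for each degree $d$ separately while the string equation mixes degrees, one must track the bigrading in $T$ and $S=T-Q$ and extract the coefficient of $T^0S^d$, as in Lemma~\ref{lem:T=0contribution}; your proposal confronts neither the vanishing mechanism for the Hodge terms nor this extraction.

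Second, the case $d=0$ lies entirely outside your argument: the localization graph sum requires every edge to carry positive degree, so it covers only $d\ge1$. For $d=0$ one has $\oM_{g,n}(\bP^1,0)=\oM_{g,n}\times\bP^1$ and the class must be computed directly from the definition of the localized virtual cycle (via the Kiem--Li formula \cite{KiemLi}, or the Getzler--Pandharipande computation for the blow-up of a K3 surface); the outcome --- $(-1)^{g-1}\lambda_g\lambda_{g-1}$ for $n=1$, $(-1)^{g}\lambda_g\lambda_{g-2}$ for $n=0$, and zero for $n\ge2$ --- is identified with $J_{2g-2+n}$ only by additionally invoking Theorems~\ref{thm:main} and~\ref{thm:n10}. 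Since the statement explicitly includes $d\ge0$, this case cannot be skipped. A smaller point: the $\mathbb{Z}_2$-equivariant topological recursion of \cite{GKL} plays no role in this proof; in the paper it is a motivation and a consequence of Theorem~\ref{thm:C=J}, not an ingredient.
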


Secondly, the stationary sector of the descendant spin Gromov-Witten theory of $(\bP^1,\cO(-1))$ whose correlators are defined as
\begin{align} \label{eq:stationarysector}
	 \langle \tau_{k_1}\dots\tau_{k_n}\rangle^{\bP^1,\cO(-1),{\rm des}}_{g} \coloneqq
 (-1)^{g+1} \sum_{d=0}^\infty Q^d\int_{C_{g,n,d}} \prod_{i=1}^n \psi_i^{k_i}
\end{align}
can be computed by $\mathbb{Z}_2$-equivariant topological recursion:
\begin{theorem} \label{thm:TR-spin}
	Let $\omega^{(g)}_n$ be the system of differentials produced via $\mathbb{Z}_2$-equivariant topological recursion on the curve $\bP^1$ with the following input data
	\begin{align}
		x& =\sqrt{-Q/2}
		\Bigl(z+\frac{1}{z}\Bigr); & y&= 2 \log z; & B & =\frac{dz_1dz_2}{(z_1-z_2)^2} + \frac{d\iota(z_1)dz_2}{(\iota(z_1)-z_2)^2};
		& \iota\colon z\to -\frac{1}{z},
	\end{align}
	where $z$ is a global rational coordinate, and $\iota$ is the involution generating the $\mathbb{Z}_2$-action. Then for $2g-2+n\geq 0$ near $z=0$ in the local coordinate $x^{-1}$ we have:
	\begin{align}
		\omega^{(g)}_n = 2^{2g-2+n} \sum_{k_1,\dots,k_n} \langle \tau_{k_1}\dots\tau_{k_n}\rangle^{\bP^1,\cO(-1),{\rm des}}_{g}
     \prod_{i=1}^n d \frac {(2k_i-1)!!}{x_i^{2k_i+1}} +\delta_{g,0}\delta_{n,2} \frac{dx_1dx_2}{(x_1-x_2)^2}.
	\end{align}
\end{theorem}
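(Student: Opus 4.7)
The strategy is to combine the spin Gromov-Witten/Hurwitz correspondence of~\cite{giacchetto2024spingromovwittenhurwitzcorrespondencemathbbp1} with the $\mathbb{Z}_2$-equivariant topological recursion framework of~\cite{GKL}. I would first rewrite the stationary descendants on the spin GW side as a generating series of spin Hurwitz-type invariants on $\bP^1$, then identify this series with the output of $\mathbb{Z}_2$-equivariant TR on the spectral curve in the statement, and finally extract the individual descendant correlators by asymptotic expansion of $\omega^{(g)}_n$ near $z=0$.

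For the first step, each insertion $\tau_{k_i}$ paired with $ev_i^*([\mathrm{pt}])$ should convert, via the stationary-to-completed-cycles identity on $\bP^1$ in its spin form established in~\cite{giacchetto2024spingromovwittenhurwitzcorrespondencemathbbp1}, into an insertion of the completed cycle of length $k_i+1$ on the Hurwitz side. The overall sign $(-1)^{g+1}$ comes from the contribution of the $\cO(-1)$ twist to the equivariant Euler class of the virtual normal bundle in the localization calculation, and $Q^d$ tracks the degree of the cover.

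For the second step, the spectral curve is pinned down by the unstable base cases $(g,n)\in\{(0,1),(0,2)\}$. The genus-zero one-point function of the spin problem is logarithmic, which forces $y=2\log z$; the genus-zero two-point function forces the Bergman kernel to be the $\iota$-symmetrized one, reflecting that the half-spin weights are invariant under the $\mathbb{Z}_2$-deck of the square-root cover of $\bP^1$; and the two ramification points of $x$ at $z=\pm 1$, exchanged by $\iota(z)=-1/z$, correspond to the two branch points of the target $\bP^1$ carrying the nontrivial $\mathbb{Z}_2$-monodromy on the spin structure. Once the base cases match, the $\mathbb{Z}_2$-equivariant recursion step of~\cite{GKL} produces the higher $(g,n)$ correlators.

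For the third step, since $x^{-1}\sim z/\sqrt{-Q/2}$ near $z=0$, each $\omega^{(g)}_n$ admits a formal Taylor expansion in the $x_i^{-1}$ whose coefficients are the standard descendant basis $d((2k-1)!!/x^{2k+1})$ (up to scalars); the prefactor $2^{2g-2+n}$ is produced by the rescaling $y=2\log z$ as opposed to the natural normalization $y=\log z$ and propagates as one factor of $2$ per Euler characteristic unit through the TR kernel. The main obstacle is the second step: verifying that the spin GW/Hurwitz generating series is actually governed by $\mathbb{Z}_2$-equivariant TR on the stated spectral curve rather than by some deformation or twist of it. The cleanest route is likely an R-matrix identification, realizing the underlying CohFT via a Givental group element dictated by the spectral curve and then invoking the CohFT/TR correspondence in its $\mathbb{Z}_2$-equivariant form from~\cite{GKL}; alternatively, one can verify the $\mathbb{Z}_2$-equivariant loop equations on the Hurwitz side directly. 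The bookkeeping of signs and powers of $2$, while nontrivial, is essentially forced by the base cases.
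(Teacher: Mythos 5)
Your plan hinges entirely on your second step --- showing that the spin GW/Hurwitz generating series is computed by $\mathbb{Z}_2$-equivariant topological recursion on the stated curve --- and that step is exactly the content of Theorem~\ref{thm:TR-spin}; you flag it yourself as ``the main obstacle'' and do not close it. Matching the unstable base cases $(0,1)$ and $(0,2)$ does \emph{not} pin down the answer: the recursion then produces \emph{some} stable differentials, but nothing in your argument shows they coincide with the spin GW correlators. To make that identification you would have to either verify the ($\mathbb{Z}_2$-equivariant) loop equations for the spin Hurwitz/GW side directly, or carry out the R-matrix/CohFT identification you allude to --- each of which is a substantial proof in itself, not bookkeeping forced by the base cases. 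A secondary error: the prefactor $2^{2g-2+n}$ is not produced by the normalization $y=2\log z$ (rescaling $y$ by $2$ would contribute $2^{-(2g-2+n)}$ in the stable range, through the recursion kernel); it comes from the fact that the full $\mathbb{Z}_2$-equivariant curve has a second critical point at $z=-1$, the image of $z=1$ under $\iota$, and the comparison with the reduced (one-critical-point) recursion via~\cite[Proposition 8.8]{GKL}.

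For contrast, the paper's proof does not go through the GW/Hurwitz correspondence at all. It starts from Eynard's formula, which expresses the ordinary TR differentials $\tilde\omega^{(g)}_n$ of the Kazarian--Norbury spectral curve~\eqref{eq:SigmaKN} as intersection numbers against $\sum_d \epsilon^d J_{2g-2+n-d}$; it then invokes Theorems~\ref{thm:main} and~\ref{thm:C=J} --- already established in Section~2 by localization and materialization --- to identify these classes with $(-1)^{g+d+1}p_*C_{g,n,d}$, i.e.\ with the \emph{ancestor} stationary spin GW invariants; the passage to descendants is the explicit re-expansion identity $\frac{(2k+1)!!}{x^{2k+2}}dx=\sum_{m\ge0}\frac{Q^m}{m!}\frac{(2(k+m)+1)!!}{w^{2(k+m)+2}}dw$; and the factor $2^{2g-2+n}$ and the $(0,2)$ term are handled as above. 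In other words, the hard analytic/geometric input is front-loaded into the localization argument of Section~2, and Theorem~\ref{thm:TR-spin} becomes a repackaging; your route would instead front-load the difficulty into an unproven spin Hurwitz/TR correspondence.
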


This theorem might be considered as a spin version of the Norbury-Scott conjecture~\cite{NS} proved in~\cite{DOSS}, see also~\cite{Eynard-Intersection,FLZ}.

\subsection{Organization of the paper} In Section~2 we recall a particular localization formula for the $\bC^*$-equivariant spin Gromov-Witten theory of $\bP^1$. Then we first analyze its numerical consequences that give an expression for the stationary sector of the ancestor potential of the $\bC^*$-equivariant spin Gromov-Witten theory of $\bP^1$ in terms of the Kontsevich-Witten tau function with shifted times. Repeating this analysis on the level of classes (which is a version of Givental's materialization) we derive tautological relations that prove Theorems~\ref{thm:main} and~\ref{thm:n10}, as well as Theorem~\ref{thm:C=J}.

In Section~3 we explain the origin of this computation: we recall a topological recursion reformulation of the numerical version of~\cite[Conjecture 1, Part 2]{KN-conjecture} given in~\cite[Section 2.2.1]{KN-conjecture} and reformulate it in terms of $\mathbb{Z}_2$-equivariant topological recursion. The latter recursion  appears to be the one given in Theorem~\ref{thm:TR-spin}, and we conclude by proving this theorem.

\subsection{Notation} Throughout the text we use the following notation:
\begin{itemize}
	\item $\set{n}$ denotes $\{1,\dots,n\}$.
	\item $[u^d]$ denotes the operator that extracts the corresponding coefficient from the whole expression to the right of it, that is, $[u^d]\sum_{i=-\infty}^\infty a_iu^i \coloneqq a_d$.
	\item
	$\restr{u}{v}$ denotes the operator of substitution (or restriction), that is, $\restr{u}{v} f(u) \coloneqq f(v)$.
\end{itemize}

\subsection{Acknowledgments}  The authors thank Xavier Blot and Reinier Kramer for very useful discussions. We also thank the anonymous referees for useful remarks.

A.~A. was supported by the Institute for Basic Science (IBS-R003-D1).
B.~B. was supported by the ISF Grant 2848/25. B.~B., P.~D.-B., and M.~K. were supported by the Russian Science Foundation (grant No. 24-11-00366).
S.~S. was supported by the Dutch Research Council (OCENW.M.21.233).

The Center of Geometry and Physics of the Institute of Basic Studies, located in Pohang, is acknowledged for hospitality.

\section{Spin Gromov-Witten theory of the projective line} \label{sec:LogTR}

Consider the $\bC^*$ action on $\bP^1$ with two fixed points, $0$ and $\infty$, with the tangent weights $-1$ and $1$, respectively. Consider the moduli space of stable maps $\oM_{g,n}(\bP^1,d)$, $g,n\geq 0$, $d\geq 1$. Let $\pi\colon\overline{\mathcal{C}}_{g,n}(\bP^1,d) \to \oM_{g,n}(\bP^1,d)$ be the universal curve and $f\colon\overline{\mathcal{C}}_{g,n}(\bP^1,d) \to\bP^1$ be the universal map. Our main tool is a formula for the $\bC^*$-equivariant class
\begin{equation} \label{eq:MainClass}
	C_{g,n,d}(T)\coloneqq \Big(\prod_{i=1}^n ev_i^*(\mathbf{0})\Big) \cdot c_{g-1+d} (R^1\pi_*f^*\cO(-1)) \cdot [\oM_{g,n}(\bP^1,d)]^{\mathrm{vir}},
\end{equation}
where $\mathbf{0}$ stands for the equivariant Poincar\'e dual of $0$. This formula is
derived in~\cite{faber2004hodgeintegralspartitionmatrices,giacchetto2024spingromovwittenhurwitzcorrespondencemathbbp1} and we recall it in the next subsection. Note that here we have to choose a lift of the $\bC^*$ action to $\cO(-1)$. We do it by requiring that the fiber weights over $0$ and $\infty$ are $1$ and $0$, respectively. This way the localization formula will produce only graphs of star tree type --- this idea goes back to~\cite{manin1994generatingfunctionsalgebraicgeometry} and is efficiently used in~\cite{faber2004hodgeintegralspartitionmatrices,BlotSS}.

\begin{remark} By construction, $C_{g,n,d}(T)\in A_{g-1+d}^{\bC^*} (\oM_{g,n}(\bP^1,d))$ over the ring $H^*_{\bC^*}(\mathrm{pt})\cong \bQ[T]$. It is a polynomial in $T$, though the localization formula presents it as a Laurent polynomial. The vanishing of the coefficients of the negative powers of $T$ gives us relations between classes in $A_*(\oM_{g,n}(\bP^1,d))$.
\end{remark}

\begin{remark} The non-equivariant limit of the class $c_{g-1+d} (R^1\pi_*f^*\cO(-1)) \cdot [\oM_{g,n}(\bP^1,d)]^{\mathrm{vir}}$ is equal to the localized virtual cycle $[\oM_{g,n}(\bP^1,d)]^{\mathrm{loc},\cO(-1)}$ of the spin curve $(\bP^1,\cO(-1))$~\cite[Corollary~2.4]{KiemLi}. This, together with Equations~\eqref{eq:Cgnd-definition} and~\eqref{eq:MainClass}, implies that $\restr{T}{0}C_{g,n,d}(T) = C_{g,n,d}$ and the intersection numbers
	$\int_{C_{g,n,d}(T)} \prod_{i=1}^n \psi_i^{k_i}$
compute at $T=0$ the stationary sector of the descendant spin Gromov-Witten theory of $(\bP^1,\cO(-1))$ defined in~\eqref{eq:stationarysector}.
\end{remark}

\subsection{The formula}

We fix integers $g,n\geq 0$ and $d\geq 1$.  We denote by $\SRT_{g,n}(\bP^1,d)$ the set of  star rooted trees of genus $g$ with $n$ legs, with edges of total degree $d$, which are the graphs
\begin{align}
	\Gamma=(V,H,\iota \colon H\to H, H^{\iota}\simeq \{\sigma_1,\ldots, \sigma_{n}\}, v_0\in V, d\colon E\to \mathbb{Z}_{\geq 1},
	o\colon E\to \set{E}),
\end{align}
where $V=V(\Gamma)$ is the set of vertices, $H=H(\Gamma)$ is the set of half-edges, $\iota$ is the involution on $H$ whose cycles of length two form the set of edges $E=E(\Gamma)$ and whose fixed points are the $n$ legs labeled from $1$ to $n$. Furthermore, $g\colon V\to \mathbb{Z}_{\geq 0}$ is the genus function such that the total arithmetic genus of the graph is $g$,  $\sum_{e\in E} d(e) = d$, and we demand that
\begin{enumerate}
	\item the graph $(V,E)$ must be a rooted tree, and all edges are between the root $v_0\in V$ and another vertex (hence the term ``star rooted tree'');
	\item the legs $\sigma_{1},\dots,\sigma_{n}$ are attached to the root vertex $v_0\in V$.
\end{enumerate}
Furthermore, for each vertex $v$ let $n(v)$ denote the valence of $v$. In particular, $n(v_0) = n+|E|$ and $n(v) = 1$ if $v\not=v_0$. We say that a vertex is stable if $2g(v)-2+n(v)>0$, and unstable otherwise.

Note that since $\Gamma$ is a tree we have $\sum_{v\in V} g(v)=g$ and since $\Gamma$ is a star rooted tree we have a natural isomorphism $V\setminus \{v_0\} \cong E$. It is convenient to order the edges in $E$ by choosing an isomorphism $o\colon E\to \{1,\dots,|E|\}$, and consider the ordering as a part of the data of a star rooted tree. It implies an ordering on $V\setminus \{v_0\}$, and we denote by $e_i$ the $i$-th edge, by $v_i$ the $i$-th non-root vertex (attached to $e_i$) , and let $d_i$ stand for $d(e_i)$ and $g_i$ for $g(v_i)$, respectively, $i=1,\dots,|E|$. Additionally, let $g_0=g(v_0)$.


We want to assign to $\Gamma$ a class in $A_*( \oM_{g,n}(\bP^1,d)) \otimes_{\bQ} \bQ[T,T^{-1}]$. First, we assign to $\Gamma$ a stratum in $\oM_{g,n}(\bP^1,d)$ that consists of stable maps with $1+2|E|$ components, where we associate to $v_0$ a constant map to $0\in \bP^1$ of a curve in $\oM_{g_0,n+|E|}$ and to each $v_i\in V\setminus \{v_0\}$ a constant map to $\infty$ of a curve in $\oM_{g_i,1}$ (the unstable vertices are kept for the bookkeeping reasons, we discuss their contraction below). To each edge $e_i$ we associate a Galois covering of $\bP^1$ of degree $d_i$. Let $B_\Gamma$ be the boundary map from $\oM_{g_0,n+|E|}\times \prod_{i=1}^{|E|} \oM_{g_i,1}$ to the corresponding stratum in $\oM_{g,n}(\bP^1,d)$

The $n(v_0)=n+|E|$ marked points on a curve assigned to the root vertex match the $n$ legs and $E$ edges attached to $v_0$ such that the first $n$ marked points correspond to the legs and the last $|E|$ to the edges, respectively, preserving their given ordering.


\begin{proposition}[See~\cite{faber2004hodgeintegralspartitionmatrices,giacchetto2024spingromovwittenhurwitzcorrespondencemathbbp1}] For $d\geq 1$ we have:
\begin{multline}\label{eq:C(T)-formula}
	C_{g,n,d}(T) = (-1)^{g+1}\; T^{2g-2+n-d}
	\\
\times\sum_{\Gamma\in \SRT_{g,n}(\bP^1,d)} \frac{(-1)^{|E|}}{|E|!}  (B_\Gamma)_* \left(  \prod_{i=1}^{|E|}  \frac{d_i^{d_i+1}}{d_i! T} \frac{1}{1-d_i \frac{\psi_{n+i}} T} \otimes \bigotimes_{i=1}^{|E|}  	\frac{\frac{(-1)^{g_i}\lambda_{g_i}}{T^{g_i}} \Lambda(\frac{1}{T})}{1+d_i \frac{\psi_1}T} \right).
\end{multline}
\end{proposition}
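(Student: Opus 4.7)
The plan is to apply the Graber--Pandharipande virtual Atiyah--Bott localization formula to $C_{g,n,d}(T)$ on $\oM_{g,n}(\bP^1,d)$, following the template of \cite{faber2004hodgeintegralspartitionmatrices} adapted to the spin setting of \cite{giacchetto2024spingromovwittenhurwitzcorrespondencemathbbp1}. The $\bC^*$-fixed loci of $\oM_{g,n}(\bP^1,d)$ are parameterised by decorated bipartite graphs: vertices correspond to contracted components mapping to $0$ or to $\infty$, and edges correspond to non-contracted components realised as Galois covers $\bP^1\to\bP^1$ totally ramified over the two fixed points. Each fixed locus is a product of moduli spaces attached to the vertices, quotiented by the edge automorphism groups.

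Two reductions will collapse the graph sum to $\SRT_{g,n,d}$. First, the factor $\prod_{i=1}^n ev_i^*(\mathbf{0})$ concentrates every marked leg at a vertex mapping to $0$ and leaves no legs at $\infty$. Second and crucially, the chosen lift of the $\bC^*$-action on $\cO(-1)$, with fiber weight $1$ at $0$ and fiber weight $0$ at $\infty$, forces the contribution of any graph containing an $\infty$-vertex of valence $\geq 2$ to vanish: the restriction of $R^1\pi_* f^*\cO(-1)$ at such a vertex cannot produce the positive power of $T$ needed to cancel the denominators from the node-smoothing normal directions, precisely because the fiber weight at $\infty$ equals $0$. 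Hence the only surviving configurations are the star rooted trees of the statement, with a single $0$-vertex carrying all legs and all edges joining it to one-valent $\infty$-leaves.

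For each such star the three local contributions are then read off from the localization formula. The edge term $d_i^{d_i+1}/(d_i!\,T)$ is the standard Faber--Pandharipande contribution of a degree-$d_i$ totally ramified Galois cover, assembled from its automorphism count, the equivariant cohomology of $f^*\cO(-1)$ along the cover, and the node-smoothing normal weights at the two ends. The root-vertex factor $1/(1-d_i\psi_{n+i}/T)$ is the inverse equivariant Euler class of the node-smoothing normal direction at the $i$-th edge-node, using that the tangent weight at $0$ equals $-1$. The leaf-vertex factor $((-1)^{g_i}\lambda_{g_i}/T^{g_i})\cdot \Lambda(1/T)/(1+d_i\psi_1/T)$ combines Mumford's formula for the restriction of $R^1\pi_* f^*\cO(-1)$ to a contracted genus-$g_i$ component at $\infty$ with the node-smoothing normal factor of tangent weight $+1$ at $\infty$. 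The prefactor $(-1)^{|E|}/|E|!$ comes from the ordering convention on the edges, and the global sign and $T$-power $(-1)^{g+1}T^{2g-2+n-d}$ follow from tracking signs in $c_{g-1+d}(R^1\pi_* f^*\cO(-1))$ and the total equivariant degree shift.

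The main non-trivial point will be the vanishing at $\infty$-vertices of valence $\geq 2$, which is the mechanism that collapses arbitrary localization graphs to star rooted trees. Everything else will then be a direct application of the virtual localization formula together with the standard Hodge-integral computations on the $\oM_{g_i,1}$ leaves, as carried out in \cite{faber2004hodgeintegralspartitionmatrices,giacchetto2024spingromovwittenhurwitzcorrespondencemathbbp1}.
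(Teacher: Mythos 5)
The paper does not prove this Proposition itself: it is quoted from \cite{faber2004hodgeintegralspartitionmatrices,giacchetto2024spingromovwittenhurwitzcorrespondencemathbbp1}, and the surrounding text only records the choice of lift (fiber weights $1$ over $0$ and $0$ over $\infty$) and the resulting collapse to star rooted trees, crediting the idea to \cite{manin1994generatingfunctionsalgebraicgeometry}. Your outline --- virtual localization, concentration of the legs over $0$ by $\prod ev_i^*(\mathbf{0})$, the weight-$0$ lift at $\infty$ killing all graphs except star trees, and the standard vertex/edge contributions --- is exactly the route taken in those references, so in substance you are reproducing the intended proof. One imprecision worth fixing: the vanishing at $\infty$-vertices of valence $m\ge 2$ does not come from a failure to cancel the node-smoothing denominators (those contribute harmless factors $\bigl(\tfrac{T}{d_i}-\psi\bigr)^{-1}$); it comes from the normalization sequence for $R^1\pi_*f^*\cO(-1)$, in which each node over $\infty$ contributes a weight-$0$ line to the numerator of the Euler class while each contracted component over $\infty$ contributes a single weight-$0$ line $H^0(C_v,\cO)$ to the denominator, leaving a net factor $0^{\,m-1}$. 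Stating the mechanism this way also explains at once why the surviving valence-one $\infty$-vertices carry $(-1)^{g_i}\lambda_{g_i}=e(\mathbb{E}_v^\vee\otimes[\mathrm{weight}\ 0])$ and why connectivity then forces a unique vertex over $0$.
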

Here $\Lambda(u)=\sum_{k=0} u^k \lambda_k$. Equation~\eqref{eq:C(T)-formula} is obtained as a product of~\cite[Equations 3.13 and 3.14]{giacchetto2024spingromovwittenhurwitzcorrespondencemathbbp1}, with a specialization of the parameter $\alpha$ in~\emph{op.~cit.} to $-1$ and suitable adjustment of the signs.

\begin{remark} \label{rem:conventions-unstable} This formula involves in some cases the non-existing moduli spaces $\oM_{0,2}$ (for the root vertex in the case $n=1$, $g_0=0$, $|E|=1$ or $n=0$, $g_0=0$, $|E|=2$) and $\oM_{0,1}$ (for the non-root vertices $v$ such that $g(v)=0$ and $n(v)=1$ or the root vertex in the case $n=0$, $g_0=0$, $|E|=1$).
These cases should be considered as the bookkeeping devices with the following conventions:
\begin{align}\label{eq:unstable-intersections}
	\int_{\oM_{0,1}} \frac{1}{1-a\psi_1} & = a^{-2}; &
	\int_{\oM_{0,2}} \frac{1}{1-a\psi_1} \frac{1}{1-b\psi_2} &= (a+b)^{-1}.
\end{align}
\end{remark}

\subsection{Ancestor correlators:~numerical evidence} \label{sec:numerical}
Let $p\colon \oM_{g,n}(\bP^1,d) \to \oM_{g,n}$. Denote by $C_{g,n}(T,Q)$ the (Poincar\'e dual of the) class
\begin{equation}\label{eq:Cgn(T,Q)}
C_{g,n}(T,Q)\coloneqq (-1)^{g+1} \sum_{d=1}^\infty Q^d p_* C_{g,n,d}(T)\in R^*(\oM_{g,n}) \otimes_{\bQ} \bQ[T,Q].
\end{equation}
In this section, we compute the correlators of the ancestors of $\mathbf{0}$ (in the sense of Givental, cf.~\cite{Givental-semisimple,Giv-Quant}) in the $\bC^*$-equivariant extension of the spin Gromov-Witten theory of $(\bP^1,\cO(-1))$ given by~\eqref{eq:Cgn(T,Q)} :
\begin{align}
\langle \tau_{k_1}\dots\tau_{k_n}\rangle^{\bP^1,\cO(-1),\C^*,{\rm anc}}_{g}
\coloneqq \int_{\oM_{g,n}}C_{g,n}(T,Q)\;\psi_1^{k_1}\dots\psi_n^{k_n}.
\end{align}
The general theory implies that this quantity is a polynomial in~$T$ and~$Q$ of homogeneous degree $d_{\rm tot}=\sum k_i-g+1$. In particular, it is identically equal to zero if $d_{\rm tot}<0$. The regular dependence on
$Q$ is evident from the very form of the expression. On the other hand, its regular dependence in $T$ (that is, the fact that the contributions of all summands with $d>d_{\rm tot}$ cancel out), is a manifestation of the regularity in $T$ of the involved equivariant class. Below, we compute these correlators in a closed form that allows one to take a limit as $T\to 0$. In the next section, we reproduce the same computations directly for the classes $C_{g,n}(T,Q)$ themselves. When we pass to $T\to 0$ limit, we routinely add the $d=0$ terms to the definition of the ancestor correlators, that is, we define
\begin{align}
	\langle \tau_{k_1}\dots\tau_{k_n}\rangle^{\bP^1,\cO(-1),{\rm anc}}_{g}
	\coloneqq  (-1)^{g+1}\int_{\oM_{g,n}} \sum_{d=0}^\infty Q^d p_*C_{g,n,d}\; \psi_1^{k_1}\dots\psi_n^{k_n}.
\end{align}
Let us collect these correlators to the potentials
\begin{align}
F^{\bP^1,\cO(-1),\C^*,{\rm anc}}(t_0,t_1,\dots;\hbar)& \coloneqq\sum_{g,n}\frac{\hbar^{2g-2+n}}{n!}
\sum_{k_1,\dots,k_n\ge0} \langle \tau_{k_1}\dots\tau_{k_n}\rangle^{\bP^1,\cO(-1),\C^*,{\rm anc}}_{g}t_{k_1}\dots t_{k_n};
\\ \notag
F^{\bP^1,\cO(-1),{\rm anc}}(t_0,t_1,\dots;\hbar)& \coloneqq\sum_{g,n}\frac{\hbar^{2g-2+n}}{n!}
\sum_{k_1,\dots,k_n\ge0} \langle \tau_{k_1}\dots\tau_{k_n}\rangle^{\bP^1,\cO(-1),{\rm anc}}_{g}t_{k_1}\dots t_{k_n}.
\end{align}
Introduce the rational functions
\begin{equation}\label{eq:Vk}
\begin{gathered}
V_k=\Bigl(\frac{Q}{T-Q}\frac{d}{dQ}\Bigr)^k\frac{Q}{T},\quad k\ge0,
\\V_0=\frac{Q}{T},\quad V_1=\frac{Q}{T(T-Q)}=\frac{1}{T-Q}-\frac{1}{T},
\quad V_2=\frac{Q}{(T-Q)^3},\quad\dots.
\end{gathered}
\end{equation}
Note that $V_k\in\mathbb{Q}[T,(T-Q)^{-1}]$ for $k\ge2$. Define
\begin{align}\label{eq:hatPk}
P_{k}(\hbar)&=-\delta_{k\ge2}V_{k}
-\sum_{g_1=1}^\infty\hbar^{2g_1}\sum_{\substack{a+l=2g_1-2\\a,l\ge0}}
(-1)^{g_1+l}T^{g_1-a} V_{k+l+2}\int_{\oM_{g_1,1}}\lambda_{g_1}\lambda_a\psi_1^{l}.
\end{align}
By construction, the coefficient of any power of $\hbar$ in~$P_{k}(\hbar)$ is an element of $\mathbb{Q}[T,(T-Q)^{-1}]$. 
The following Proposition expresses the ancestor potential in terms of the Kontsevich-Witten potential
\begin{equation} \label{eq:WK-def}
F^{\rm KW}(t_0,t_1,\dots;\hbar)=\sum_{g,n}\frac{\hbar^{2g-2+n}}{n!}
\sum_{k_1,\dots,k_n\ge0} \int_{\oM_{g,n}}\psi_1^{k_1}\dots\psi_n^{k_n}\;t_{k_1}\dots t_{k_n}.
\end{equation}

\begin{proposition} \label{prop:ancGW-KW}
The following formula holds true
\begin{multline}\label{eq:TQshifts}
F^{\bP^1,\cO(-1),\C^*,{\rm anc}}(t_0,t_1,\dots;\hbar)
\approx F^{\rm KW}\bigl(t_0+\tfrac{P_0(\hbar)}{\hbar},t_1+\tfrac{P_1(\hbar)}{\hbar},
t_2+\tfrac{P_2(\hbar)}{\hbar},\dots;(T-Q)\hbar),
\end{multline}
with the following reservation: the symbol $\approx$ here and below means that the equality holds up to unstable terms with $2g-2+n\le0$,
and also the terms with $d=0$ or $n\le1$ and arbitrary~$g$.
\end{proposition}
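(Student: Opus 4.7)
The plan is to prove Proposition~\ref{prop:ancGW-KW} by direct computation from the star-rooted-tree localization formula \eqref{eq:C(T)-formula} and reorganization as a shift of the Kontsevich-Witten potential. First, push the formula for $C_{g,n,d}(T)$ forward by $p_*$ to $\oM_{g,n}$ and integrate against $\prod_i\psi_i^{k_i}$. Each star rooted tree $\Gamma$ produces a product of a root-vertex integral on $\oM_{g_0, n+|E|}$ (carrying the leg insertions $\psi_i^{k_i}$ and the edge operator $\frac{d_j^{d_j+1}}{d_j!\,T(1 - d_j\psi_{n+j}/T)}$ at each of the $|E|$ edge-attached marked points) and a product of $|E|$ non-root integrals of $\frac{(-1)^{g_j}\lambda_{g_j}\Lambda(1/T)}{T^{g_j}(1+d_j\psi_1/T)}$ on $\oM_{g_j,1}$.

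Expanding the edge operator in $\psi$ as $\sum_{k\geq 0}\frac{d^{d+k+1}}{d!\,T^{k+1}}\psi^k$ turns the root-vertex integral into a Kontsevich-Witten correlator $\langle\prod_{i=1}^{n+|E|}\tau_{k_i}\rangle^{\rm KW}_{g_0}$ with $n$ external and $|E|$ internal $\tau$-insertions. The full graph sum, with its $(-1)^{|E|}/|E|!$ combinatorial weight and free choice of edge degrees, non-root genera, and internal $\psi$-exponents, should then be recognized as the Taylor expansion of $F^{\rm KW}(\{t_k - \widehat P_k/\hbar\}; (T-Q)\hbar)$. Splitting the per-edge ``generating function of shifts'' by non-root genus $g'$: for $g' \geq 1$, the expansion of $\Lambda(1/T)(1+d\psi_1/T)^{-1}$ together with the dimension constraint $a + b = 2g' - 2$ on $\oM_{g', 1}$ and the $d$-summation yields the Hodge-integral part of~\eqref{eq:hatPk}, with the operator $\frac{Q}{T-Q}\frac{d}{dQ}$ encoding the $d$-reindexing that produces the common factor $V_k$; for $g' = 0$ (via the $\oM_{0,1}$ convention) and $k \geq 2$, a parallel computation isolates the leading $\delta_{k\geq 2}V_k$ term.

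The $g' = 0$ contributions for $k = 0$ and $k = 1$ are not part of $\widehat P_k$; they are absorbed separately via the string and dilaton equations for $F^{\rm KW}$. The dilaton equation $\langle\tau_1\prod\tau_{k_i}\rangle^{\rm KW}_g = (2g-2+n)\langle\prod\tau_{k_i}\rangle^{\rm KW}_g$ implies that a constant shift of $t_1$ reparametrizes the $\hbar$-parameter; the shift produced by the $g'=0,\,k=1$ per-edge sum yields precisely the rescaling $\hbar \mapsto (T-Q)\hbar$ on the right-hand side of~\eqref{eq:TQshifts}. The analogous $k = 0$ string shift only affects the terms excluded by the $\approx$ reservation ($n \leq 1$, $d = 0$, or $2g - 2 + n \leq 0$), as do the exceptional bookkeeping cases for $\oM_{0,1}$ and $\oM_{0,2}$. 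The main technical obstacle is verifying that the various tree-function generating series $\sum_{d\geq 1}d^{d+m}/d!\,Q^d$ arising per-edge (for various $m$ depending on the non-root $\psi$-power) reorganize, after absorbing the string and dilaton shifts, into the diagonal structure of~\eqref{eq:hatPk} with the common factor $V_k$ and into the $(T-Q)$ rescaling of $\hbar$, via a Lagrange-inversion-type computation linking the implicit tree function $\tau = Qe^\tau$ to the explicit recursion $V_k = (\tfrac{Q}{T-Q}\tfrac{d}{dQ})^k V_0$.
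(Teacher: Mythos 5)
Your skeleton matches the paper's computation in outline (factorize the localization formula over star rooted trees, read off per-edge generating functions, split by the genus of the non-root vertex, use the dimension constraint on $\oM_{g',1}$ to get the Hodge part of $\widehat P_k$, and invoke string/dilaton for $k=0,1$ and the rescaling $\hbar\mapsto(T-Q)\hbar$), but there is a genuine gap at the first step. You push forward by $p_*$ and integrate against the \emph{ancestor} insertions $\prod_i\psi_i^{k_i}$, yet you then treat the root-vertex integral as an honest Kontsevich--Witten correlator $\langle\prod_{i=1}^{n+|E|}\tau_{k_i}\rangle_{g_0}$. That clean factorization is valid for the \emph{descendant} insertions $\psi_i\in H^*(\oM_{g,n}(\bP^1,d))$; for the ancestor ones, $(p\circ B_\Gamma)^*\psi_i$ is the pullback of $\psi_i$ under the stabilization map that forgets the marked points corresponding to genus-zero tails, and this differs from $\psi_i$ on $\oM_{g_0,n+|E|}$ by boundary divisors. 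The two families of correlators genuinely differ --- that difference is exactly the substitution $t_k\to\sum_m\tfrac{Q^m}{m!}t_{k+m}$ of~\eqref{eq:ansc-desc}, which your proposal never invokes.

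This omission is not cosmetic, because it is precisely that substitution, combined with the per-edge sums $\sum_d(Q/T)^d\tfrac{d^{d-1}}{d!}(d/T)^{k+m}$, that produces the Lambert resummation $X=\tfrac{Q}{T}e^{-Q/T}$ and hence the functions $V_k$. The mechanism you propose instead --- absorbing the $g'=0$, $k=0$ shift via the string equation of $F^{\rm KW}$ --- resums around the wrong point: the $t_0$-shift to be absorbed would be $-T\,\mathcal{T}(Q/T)$ with $\mathcal{T}$ the tree function $\sum_d\tfrac{d^{d-1}}{d!}(Q/T)^d$, not $+Q$, and the dilaton absorption of the unresummed $k=1$ edge sum yields a parameter $T\hbar\bigl(1-\mathcal{T}(Q/T)\bigr)$ rather than $T\hbar(1-Q/T)=(T-Q)\hbar$. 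Relatedly, your claim that the $k=0$ string shift ``only affects the terms excluded by the $\approx$ reservation'' is false; what is true is that, \emph{after} the ancestor--descendant conversion, the resummed shift $V_0=Q/T$ cancels against the $t_0$-shift generated by that conversion, up to excluded terms. The paper therefore separates the stages: first $F^{{\rm des}}\approx F^{\rm KW}(t_k-P_k/\hbar;T\hbar)$ with the unresummed $P_k$ (where your factorization is correct), then the cited ancestor--descendant substitution (which yields $\widetilde P_k$ and the $V_k$), and only then the string/dilaton absorption of $V_0$ and $V_1$. Your argument becomes correct either by inserting that conversion explicitly, or by redoing the factorization with the forgetful pullbacks and justifying the pushforward along the forgetful map --- the materialization route of Proposition~\ref{prop:materialization}, where $Q/T\mapsto\tfrac{Q}{T}e^{-Q/T}$ arises from the string equation applied to the forgotten points. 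Finally, note that you defer the central identity (the reorganization into $V_k=(\tfrac{Q}{T-Q}\tfrac{d}{dQ})^k\tfrac{Q}{T}$) as a ``technical obstacle'' without carrying it out.
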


The exceptional terms not covered by Proposition~\ref{prop:ancGW-KW} are also known; they are treated separately, see Section~\ref{sec:geom1} below. Note that the shifts of times $t_k$ to finite values in the Kontsevich-Witten potential are well defined for $k\ge2$ only. In our case the shifts involve $\hbar$, and since the expansions of $P_0/\hbar$ and $P_1/\hbar$ contain strictly positive powers of $\hbar$ only, the shifts of $t_0$ and $t_1$ are also well defined if treated in the expansion in~$\hbar$. This formula expresses each ancestor correlator as a finite combination of intersection numbers of $\psi$ classes with coefficients in $\mathbb{Q}[T,(T-Q)^{\pm1}]$. The geometrical origin of the potential implies that the poles at $T=Q$ cancel out and the coefficients of the potential are actually polynomial in~$T$ and~$Q$. Taking the limit at $T\to 0$ we get (after including the $d=0$ terms):
\begin{align}
& P_0(\hbar)\bigm|_{T=0}=P_1(\hbar)\bigm|_{T=0}=0;\quad
P_{k}(\hbar)\bigm|_{T=0}=(-\partial_Q)^{k-2}\tfrac1{Q^2}=\tfrac{(k-1)!}{Q^k}, \quad k \geq 2;
\\ \label{eq:Anc-WK}  & F^{\bP^1,\cO(-1),{\rm anc}}
=
F^{\rm KW}\bigl(t_0,t_1,
t_2+\tfrac{1!}{\hbar Q^2},t_3+\tfrac{2!}{\hbar Q^3},t_4+\tfrac{3!}{\hbar Q^4},\dots;-Q\hbar).
\end{align}
Here we used that $\lambda_{g_1}^2=0$ for $g_1\geq 1$~\cite{Mumford}, thus the second summand in~\eqref{eq:hatPk} vanishes at $T\to 0$.

As a corollary, we conclude that the right hand side depends regularly on $Q$. This is one of the reformulations of the conjecture in~\cite{KN-conjecture}, see~\cite[Section 2]{KN-conjecture}, where we identify $-Q$ with the parameter $\epsilon$ there. We prove, thereby, (the numerical part of) this conjecture and~\eqref{eq:TQshifts} provides its one-parameter extension.

\begin{remark} Note that formally Equation~\eqref{eq:Anc-WK} holds exactly as it is presented, unlike the statement of Proposition~\ref{prop:ancGW-KW}, where we use the symbol $\approx$. So it is formally not a direct corollary and some additional analysis of the $d=0$ and/or $n\leq1$, and unstable terms is required. We refer for that to Section~\ref{sec:geom1} and~\ref{sec:taut}.
\end{remark}

\begin{remark}
The equality of Proposition holds, following~\eqref{eq:Cgn(T,Q)}, in the series expansion in the positive powers of the variable $Q$. On the other hand, the coefficients on the right hand side in~\eqref{eq:TQshifts} are represented as rational functions in $Q$ and $T$. The specializations $T=0$ for individual terms entering the right hand side of~\eqref{eq:TQshifts} have no cohomological meaning. However, since the coefficients of this function are actually polynomial in $Q$ and $T$, we can compute the non-equivariant limit $T=0$ for the whole function by taking this specialization in the rational coefficients of its particular terms.
\end{remark}

\begin{proof}[Proof of Proposition~\ref{prop:ancGW-KW}]

The mapping $p\colon \oM_{g,n}(\bP^1,d) \to \oM_{g,n}$ maps the stratum in the moduli space of stable maps corresponding to a given star rooted tree~$\Gamma$ to the corresponding boundary stratum in~$\oM_{g,n}$. The curves of this stratum have a ``central'' component of genus~$g_0$ with all marked points situated on this component, and a number other components of genera $g_i$ attached to the central one at double points of the curve and corresponding to the non-root vertices of the tree. In fact, in this description of the stratum in~$\oM_{g,n}$, we should keep only those non-root vertices that correspond to positive genera $g_i$ while those with~$g_i=0$ and corresponding to the non-existing moduli spaces~$\oM_{0,1}$ should be contracted and removed from the description of the stratum; the contribution of such vertices is accounted by just a numerical factor implied by~\eqref{eq:unstable-intersections}.

Next, since all marked points are situated on the central component, the computation of the ancestor correlator is reduced to the computation of intersection numbers of~$\psi$ classes on the moduli space~$\oM_{g_0,n+m}$ corresponding to the central component of the curve where $m$ is the number of remaining attached components, and the contribution of the attached components is accounted by the intersection numbers over the corresponding moduli spaces~$\oM_{g_i,1}$ which are universal quantities independent of a~$\psi$-monomial associated with the marked points.

Putting all these arguments together we arrive at the following expression for the ancestor correlators. Define
\begin{align}\label{eq:hg-coef}
h_{g,k}&=
-\sum_{d=1}^\infty \Bigl(\frac{Q}{T}\Bigr)^d\;\frac{d^{d-1}}{d!}\;\Bigl(\frac{d}{T}\Bigr)^{k+2}\!\!\!\int\limits_{\oM_{g_1,1}}\!\!\!
\frac{(-T)^{g_1}\lambda_{g_1}\Lambda(\frac1T)}{1+\frac{d}{T}\psi_1},\qquad g\ge1,
\\\label{eq:h0-coef}
h_{0,k}&=-\sum_{d=1}^\infty \Bigl(\frac{Q}{T}\Bigr)^d\;\frac{d^{d-1}}{d!}\;\Bigl(\frac{d}{T}\Bigr)^{k},
\\H_g(\psi)&=\sum_{k=0}^\infty h_{g,k}\psi^k.
\end{align}
Then, for $n\ge2$, we get from~\eqref{eq:C(T)-formula}:
\begin{align}\label{eq:anc-first}
\langle \tau_{k_1}\dots\tau_{k_n}\rangle^{\bP^1,\cO(-1),\C^*,{\rm anc}}_{g}
& =\sum_{m,r\ge0}\sum_{\substack{g=g_0+g_1+\dots+g_m\\g_0\ge0,~g_i\ge1,\ i=1,\dots,m}}
\frac{T^{2g_0-2+n+m+r}}{m!\,r!}\times
\\ \notag & \qquad
\int_{\oM_{g_0,m+n}}\psi_1^{k_1}\dots\psi_n^{k_n}\;
({\pi_{r}})_*\Bigl(\prod_{j=1}^mH_{g_i}(\psi_{n+j})\prod_{\ell=1}^r H_{0}(\psi_{n+m+\ell})\Bigr),
\end{align}
where $\pi_{r}\colon\oM_{g_0,n+m+r}\to \oM_{g_0,n+m}$. The restriction $n\ge2$ is imposed to avoid an appearance of summands with $2g_0-2+n+m\le0$. These summands should be treated separately since the central component itself becomes unstable for these summands and requires a further contraction.

Thus, we arrive at the problem of computing the push-forward homomorphism associated with the mapping $\pi_{r}$. The main difficulty in treating integrals of the kind
\begin{align}
& \int_{\oM_{g_0,m+n}}\psi_1^{k_1}\dots\psi_n^{k_n}\;
({\pi_r})_*\Bigl(\prod_{j=1}^mH_{g_i}(\psi_{n+j})\prod_{\ell=1}^r H_{0}(\psi_{n+m+\ell})\Bigr)
\\ \notag & =\int_{\oM_{g_0,m+n+r}}(\pi_r)^*\bigl(\psi_1^{k_1}\dots\psi_n^{k_n}\bigr)
\prod_{j=1}^m H_{g_i}(\psi_{n+j})\prod_{\ell=1}^r H_{0}(\psi_{n+m+\ell})
\end{align}
is the nontriviality of the constant term $h_{0,0}$ in the series $H_0(\psi)$. Indeed, the difference $\pi^*_r\psi_i-\psi_i$ for $i\le m+n$ is annihilated by positive exponents of $\psi_{m+n+1},\dots,\psi_{n+m+r}$, so that in the case $h_{0,0}=0$ we could replace $\pi^*_r\psi_1^{k_1}\dots\psi_n^{k_n}$ in the integral by just $\psi_1^{k_1}\dots\psi_n^{k_n}$ and arrive at the usual intersection number of~$\psi$ classes (over $\oM_{g_0,m+n+r}$).
In the general case we need to get rid of the nontrivial coefficient $h_{0,0}$ by applying a suitable version of the string equation. This can be resolved universally for an arbitrary series of coefficients $h_{g,k}$, not necessarily the ones that we have in~\eqref{eq:hg-coef}--\eqref{eq:h0-coef}, and it can be done with the closed formulas proposed in~\cite{Givental-semisimple}. These closed formulas are nicely summarized in~\cite[Appendix A.3]{Janda-P1}, and the recipe is as follows.

Let us introduce the function~$u=u(h_{0,0},h_{0,1},\dots)$ as the solution of the implicit function equation
\begin{equation}\label{eq:ugen-def}
\frac{u}{T}=\sum_{l=0}^\infty h_{0,l}\frac{u^\ell}{\ell!}
\end{equation}
and denote
\begin{align}
h^*_{g,k}&=\sum_{l=0}^\infty h_{g,k+l}\frac{u^\ell}{\ell!}, && (g,k)\ne(0,0),
\\H_g^{*}(\psi)&=\sum_{k=0}^\infty h^*_{g,k}\,\psi^k, &&  g\ge1,
\\H_0^{*}(\psi)&=\sum_{k=2}^\infty h^*_{0,k}\,\psi^k,
\\\label{eq:Sgen-def}
S&=\frac{T}{1-T h^*_{0,1}}.
\end{align}
Then, by~\cite[Equation (A.5)]{Janda-P1}, with the notations adopted to the present paper, we have
\begin{align}
& \sum_{r=0}^\infty\frac{T^{2g_0-2+n+m+r}}{r!}({\pi_{r}})_*\Bigl(\prod_{j=1}^mH_{g_i}(\psi_{n+j})
   \prod_{\ell=1}^r H_{0}(\psi_{n+m+\ell})\Bigr)
\\ \notag & \qquad =
\sum_{r=0}^\infty\frac{S^{2g_0-2+n+m+r}}{r!}({\pi_{r}})_*\Bigl(\prod_{j=1}^mH^*_{g_i}(\psi_{n+j})
   \prod_{\ell=1}^r H^*_{0}(\psi_{n+m+\ell})\Bigr)
\\ \notag & \qquad =
\prod_{j=1}^mH^*_{g_i}(\psi_{n+j})\;
\sum_{r=0}^\infty\frac{S^{2g_0-2+n+m+r}}{r!}({\pi_{r}})_*\Bigl(
   \prod_{\ell=1}^r H^*_{0}(\psi_{n+m+\ell})\Bigr).
\end{align}
The series $H^*_{0}(\psi)$ has vanishing not only constant but also linear term. This implies by dimension count that the sum on the right hand side is finite.  Elimination of linear term is covered by the dilaton equation and is accounted by the rescaling~\eqref{eq:Sgen-def}. Vanishing of the constant term in~$H^*_0(\psi)$ implies that denoting
\begin{align}
\langle\tau_{k_1}\dots\tau_{k_m}\rangle^{\rm pt}_{g_0}&=\int_{\oM_{g_0,m}}\psi_1^{k_1}\dots\psi_m^{k_m},
\\\label{eq:Pgen-def}
P_{k}(\hbar)&=\delta_{k\ge2}h^*_{0,k}+\sum_{g_1\ge1}\hbar^{2g_1}h^*_{g_1,k}.
\end{align}
we get (for $n\ge2$)
\begin{align}
\langle \tau_{k_1}\dots\tau_{k_n}\rangle^{\bP^1,\cO(-1),\C^*,{\rm anc}}_{g}
& =\sum_{m,r\ge0}\sum_{\substack{g=g_0+g_1+\dots+g_m\\g_0\ge0,\ g_i\ge1,\ i=1,\dots,m}}
\frac{S^{2g_0-2+n+m+r}}{m!\,r!}
\\ \notag & \qquad \times
\int_{\oM_{g_0,m+n+r}}\psi_1^{k_1}\dots\psi_n^{k_n}\;
\prod_{j=1}^mH^*_{g_i}(\psi_{n+j})\prod_{\ell=1}^r H^*_{0}(\psi_{n+m+\ell})
\\ \notag &=\sum_{g_0=0}^gS^{2g_0-2+n}\langle\tau_{k_1}\dots\tau_{k_n}[\hbar^{2(g-g_0)}]e^{S\sum_{k=0}^\infty P_{k}(\hbar)\tau_k }\rangle^{\rm pt}_{g_0}.
\end{align}
Thus, finally,
\begin{align}
F^{\bP^1,\cO(-1),\C^*,{\rm anc}}(t_0,t_1,\dots;\hbar)&=
\sum_{g\ge0}\hbar^{2g-2}
\langle e^{\hbar \sum_{k=0}^\infty t_k\tau_k}\rangle^{\bP^1,\cO(-1),\C^*,{\rm anc}}_{g}
\\\notag&\approx
\sum_{g_0\ge0}(S\hbar)^{2g_0-2}
\langle e^{S\hbar \sum_{k=0}^\infty t_k\tau_k}e^{S\sum_{k=0}^\infty P_{k}(\hbar)\tau_k }\rangle^{\rm pt}_{g_0}
\\\notag&= F^{\rm KW}\bigl(t_0+\tfrac{P_0(\hbar)}{\hbar},t_1+\tfrac{P_1(\hbar)}{\hbar},\dots;S\hbar).
\end{align}

It remains to identify the constants $u,h^*_{g,k},S,P_{k}(\hbar)$ given by~\eqref{eq:ugen-def}--\eqref{eq:Sgen-def} and~~\eqref{eq:Pgen-def} in the case when $h_{g,k}$ are defined by~\eqref{eq:hg-coef}--\eqref{eq:h0-coef}. This can be done explicitly and rational dependence of these constants in~$T$ and~$Q$ is a surprising corollary of these computations.  We have:
\begin{equation}
\sum_{\ell=0}^\infty h_{0,l}\frac{u^\ell}{\ell!}=-\sum_{d=1}^\infty\sum_{l=0}^\infty \Bigl(\frac{Q}{T}\Bigr)^d\;\frac{d^{d-1}}{d!}\;\Bigl(\frac{d}{T}\Bigr)^{\ell}\frac{u^\ell}{\ell!}
=-\sum_{d=1}^\infty\Bigl(\tfrac{Q}{T}e^{\frac{u}{T}}\Bigr)^d\;\frac{d^{d-1}}{d!}.
\end{equation}
Therefore, the implicit equation~\eqref{eq:ugen-def} on~$u$ reads
\begin{equation}
\frac{u}{T}=-\sum_{d=1}^\infty\Bigl(\tfrac{Q}{T}e^{\frac{u}{T}}\Bigr)^d\;\frac{d^{d-1}}{d!}.
\end{equation}
The solution to this equation is given explicitly by $u=-Q$. Indeed, it is well known that the inverse change to that one given by the Lambert curve equation $y=xe^{-x}$ is given by $x=\sum_{d=1}^\infty\frac{d^{d-1}}{d!}y^d$; we just apply this fact to $y=Q/T$.

Next, regarding $X=X(Q)=\tfrac{Q}{T}e^{\frac{u}{T}}=\tfrac{Q}{T}e^{-\frac{Q}{T}}$ as a local change of coordinate we observe the equality $\frac{X}{T}\frac{d}{dX}=\frac{Q}{T-Q}\frac{d}{dQ}$. Therefore, we get
\begin{align}
h^*_{0,k}
& =-\sum_{d=1}^\infty\sum_{l=0}^\infty \Bigl(\frac{Q}{T}\Bigr)^d\;\frac{d^{d-1}}{d!}\;\Bigl(\frac{d}{T}\Bigr)^{k+\ell}\frac{u^\ell}{\ell!}    =-\sum_{d=1}^\infty \frac{d^{d-1}}{d!}\bigl(\tfrac{Q}{T}e^{-\frac{Q}{T}}\bigr)^d\bigl(\tfrac{d}{T}\bigr)^k
\\ \notag & =-\sum_{d=1}^\infty \frac{d^{d-1}}{d!}X^d\bigl(\tfrac{d}{T}\bigr)^k
   =-\Bigl(\frac{X}{T}\frac{d}{dX}\Bigr)^k\sum_{d=1}^\infty \frac{d^{d-1}}{d!}X^d
   =-\Bigl(\frac{Q}{T-Q}\frac{d}{dQ}\Bigr)^k\frac{Q}{T}=-V_k.
\end{align}
As a corollary, we get
\begin{equation}
S=\frac{T}{1-Th^*_{0,1}}=\frac{T}{1+T\,V_1}=T-Q.
\end{equation}
By a similar computation, for $g\ge1$  we get
\begin{align}
h^*_{g,k}&=-\sum_{d=1}^\infty \frac{d^{d-1}}{d!}\bigl(\tfrac{Q}{T}e^{-\frac{Q}{T}}\bigr)^d
\sum_{l\ge0}\bigl(\tfrac{d}{T}\bigr)^{k+2+l}(-1)^\ell
\int_{\oM_{g,1}}(-T)^g\lambda_g\Lambda\bigl(\tfrac1{T}\bigr)\psi_1^\ell
\\ \notag &=-
\sum_{l\ge0}(-1)^{g+\ell}T^g V_{k+l+2}
\int_{\oM_{g,1}}\lambda_g\Lambda\bigl(\tfrac1{T}\bigr)\psi_1^\ell .
\end{align}
Substituting the obtained expressions for $h_k^*$ to~\eqref{eq:Pgen-def} we obtain exactly~\eqref{eq:hatPk}. This completes the proof of Proposition.
\end{proof}

\begin{remark} 
For the \emph{descendant} correlators
\begin{equation}
	\langle \tau_{k_1}\dots\tau_{k_n}\rangle^{\bP^1,\cO(-1),\C^*,{\rm des}}_{g}
	\coloneqq  (-1)^{g+1}\sum_{d=0}^\infty Q^d\int_{\oM_{g,n}(\bP^1,d)}  C_{g,n,d}(T)\; \psi_1^{k_1}\dots\psi_n^{k_n}
\end{equation}
the arguments similar to those in the above proof lead to the following analog of Eq.~\eqref{eq:anc-first}:
\begin{multline}
\langle \tau_{k_1}\dots\tau_{k_n}\rangle^{\bP^1,\cO(-1),\C^*,{\rm des}}_{g}
=\sum_{m,r\ge0}\sum_{\substack{g=g_0+g_1+\dots+g_m\\g_0\ge0,~g_i\ge1~(i\ge1)}}
\frac{T^{2g_0-2+n+m+r}}{m!\,r!}\times
\\
\int_{\oM_{g_0,m+n}}\psi_1^{k_1}\dots\psi_n^{k_n}\;
\prod_{j=1}^mH_{g_i}(\psi_{n+j})\prod_{\ell=1}^r H_{0}(\psi_{n+m+\ell}).
\end{multline}
It implies, in turn, the corresponding expression for the potential
\begin{gather}
F^{\bP^1,\cO(-1),\C^*,{\rm des}}(t_0,t_1,\dots;\hbar)\approx
F^{\rm KW}\bigl(t_0+\tfrac{\widetilde P_0(\hbar)}{\hbar},t_1+\tfrac{\widetilde P_1(\hbar)}{\hbar},\dots;T\hbar),
\\\widetilde P_{k}(\hbar)=-\sum_{d=1}^\infty
\Bigl(\frac{Q}{T}\Bigr)^d\;\frac{d^{d-1}}{d!}\;
\Bigl(\frac{d}{T}\Bigr)^k\biggl(
1+\Bigl(\frac{d}{T}\Bigr)^2
\sum_{g_1=1}^\infty\hbar^{2g_1}
\!\!\!\int\limits_{\oM_{g_1,1}}\!\!\!
\frac{(-T)^{g_1}\lambda_{g_1}\Lambda(\frac1T)}{1+\frac{d}{T}\psi_1}\biggr).
\end{gather}
The descendant correlators are not rational in~$T$ and~$Q$ any more, and it is not obvious at all how to pass to the limit $T\to0$ (even though we know \emph{a priori} that this limit does exist).

The relation between the descendant and the ancestor potentials is given by a linear change of coordinates (with a special treatment of the unstable terms):
\begin{align} \label{eq:ansc-desc}
	F&{}^{\bP^1,\cO(-1),\C^*,{\rm anc}}\approx F^{\bP^1,\cO(-1),\C^*,{\rm des}}\bigm|_{t_k\to\sum_{m=0}^\infty \frac{Q^m}{m!}t_{k+m}}.
\end{align}
\end{remark}

\begin{remark}
Note that both $F^{\bP^1,\cO(-1),\C^*,{\rm des}}$ and $F^{\bP^1,\cO(-1),\C^*,{\rm anc}}$
are solutions of the KdV integrable hierarchy, therefore can be considered in the framework of reductions of both KP and BKP hierarchies.
\end{remark}

\subsection{Materialization} \label{sec:geom1} In this section we compute the class $C_{g,n}(T,Q)\in R^*(\oM_{g,n})\otimes \bQ[T,T^{-1}][[Q]]$ itself. This computation involves the string and the dilaton equation and is known in the realm of localization formulas under the name ``materialization''~\cite{Givental-semisimple,Janda-P1}.
In order to describe the answer, we need an ``$\oM_{g,n}$ version'' of star rooted trees $\SRT_{g,n}$. For completeness, we give a full definition that largely repeats the one given above for $\SRT_{g,n}(\bP^1,d)$.

Let $\SRT_{g,n}$ denote the set of stable graphs of genus $g$ with $n$ legs, which are the graphs
\begin{align}
	\Gamma=(V,H,\iota \colon H\to H, H^{\iota}\simeq \{\sigma_1,\ldots, \sigma_{n}\}, v_0\in V,  o\colon E\to \set{|E|}),
\end{align}
where $V$, $H$, $\iota$, and $E$ are the vertices, half-edges, involution, and edges, as above, and $g\colon V\to \mathbb{Z}_{\geq 0}$ is again the genus function. As before, the graph $(V,E)$ must be a star rooted tree with all legs $\sigma_{1},\dots,\sigma_{n}$ attached to the root vertex $v_0\in V$. The valence of a vertex $v$ is denoted by $n(v)$, and we demand that all vertices are stable, that is, $2g(v)-2+n(v)>0$.

Note that we still have $\sum_{v\in V} g(v)=g$ and the edges $E$ and the non-root vertices $V\setminus \{v_0\}$ are ordered by $o\colon E\to \set{|E|}$; $e_i$, resp. $v_i$ denotes the $i$-th edge, resp., the $i$-th non-root vertex attached to $e_i$. Let $g_i=g(v_i)$, $i=0,\dots,|E|$. Note that the stability condition implies $g_i>0$ for $i\ne0$.

We want to assign to $\Gamma$ a class in $R^*( \oM_{g,n}) \otimes_{\bQ} \bQ[T,T^{-1}][[Q]]$. Let $b_\Gamma$ be the boundary map $\oM_{g_0,n+|E|}\times \prod_{i=1}^{|E|} \oM_{g_i,1}\to \oM_{g,n}$. As before, the $n+|E|$ marked points on the first irreducible component are ordered such that the first $n$ marked points correspond to the legs and the last $|E|$ to the edges/nodes, respectively, preserving their given ordering. We decorate the vertices and half-edges of $\Gamma$ by $\psi$-, $\kappa$-, and $\lambda$-classes in the following way:
\begin{itemize}
	\item The component corresponding to the root vertex is endowed with the class
	\begin{equation}\label{eq:root-contribution}
		\mathcal{V}\coloneqq (T-Q)^{2g_0-2+n+|E|} \Big( 1+\sum_{m=1}^\infty\frac{(-1)^m(T-Q)^m}{m!}\sum_{i_1,\dots,i_m\ge1}V_{i_1+1}\dots V_{i_m+1}\,\kappa_{i_1,\dots,i_m}\Big),
	\end{equation}
	which can alternatively be written as
	\begin{equation}\begin{gathered}
			(T-Q)^{2g_0-2+n+|E|} \exp\Big({\sum_{i=1}^\infty s_i\kappa_i}\Big)
	\end{gathered}\end{equation}
for the coefficients $s_i$, $i\geq 1$ determined by
$e^{-\sum_{i=i}^\infty s_it^i}=1+\sum_{k=1}^\infty (T-Q)V_{k+1}t^k$.
\item For each edge $e_i$, $i\in \set{|E|}$, we combine the $\psi$-classes $\psi'$ and $\psi''$ assigned to its half-edges attached to $v_0$ and $v_i$, respectively, as well as the $\lambda$-classes assigned to $v_i$, into the following single expression:
\begin{equation}\label{eq:edge-contribution}
	\mathcal{E}_i\coloneqq \sum_{k,l=0}^\infty\sum_{a=0}^{g_i-1} (-1)^{g_i+l+1} T^{g_i-a} V_{k+l+2}\; (\psi')^{k}\otimes(\psi'')^{l} \lambda_{g_i}\lambda_{a}.
\end{equation}
(here $\psi'$ turns into $\psi_{n+i}$ on the component corresponding to $v_0$ and $\psi''$ is the $\psi_1$ on the component corresponding to $v_i$, $i=1,\dots,|E|$).
\end{itemize}
The coefficients $V_k$ are given by~\eqref{eq:Vk} which we remind here for convenience:
\begin{equation}\label{eq:Vk-reminder}
	V_k=\sum_{d=1}^\infty X^d \frac{d^{d-1}}{d!}\Bigl(\frac{d}{T}\Bigr)^k
   =\Bigl(\frac{Q}{T-Q}\frac{d}{dQ}\Bigr)^k\frac{Q}{T},
   \quad X=\frac{Q}{T}e^{-\frac{Q}{T}}.
\end{equation}
This formula implies that $V_k$ is expressed as a rational function in~$T$ and $Q$, moreover, $V_k\in \mathbb{Q}[T,(T-Q)^{-1}]$ for $k\ge2$ and $V_{1} = \frac{Q}{T(T-Q)}$. In an exceptional case $n=0$ of the formula below we will need also the functions
\begin{equation}\label{eq:Vkl-reminder}
	V_{k,l}=\sum_{d_1,d_2=1}^\infty X^{d_1+d_2} \frac{T^{-1}d_1^{d_1}d_2^{d_2}}{(d_1+d_2)d_1!d_2!}
   \Bigl(\frac{d_1}{T}\Bigr)^k\Bigl(\frac{d_2}{T}\Bigr)^l
   =\Bigl(\frac{Q}{T-Q}\frac{d}{dQ}\Bigr)^{-1}V_{k+1}V_{l+1},
\end{equation}
where by $(Q/(T-Q)\partial_Q)^{-1}$ we mean formal integration without the constant term, that is, the operator $\int_0^Q \frac{T-Q}{Q}(\cdot)dQ$.

\begin{proposition} \label{prop:materialization} We have:
	\begin{align}\label{eq:MainFormula-Cgn}
\notag  C_{g,n}(T,Q) & = \sum_{\Gamma\in\SRT_{g,n}} \frac{1}{|E|!} (b_\Gamma)_* \Big(\mathcal{V}\prod_{i=1}^{|E|}\mathcal{E}_i\Big)
		+ \delta_{n,1}\sum_{a=0}^{g-1} (-1)^{g} T^{g-a}{\lambda_g \lambda_a} \sum_{l=0}^\infty (-1)^{l+1}V_{l+1}\, {\psi_1^{l}}  
		\\ & \quad
		+\delta_{n,0}\sum_{a=0}^{g-1} (-1)^{g} T^{g-a}{\lambda_g \lambda_a}
\Biggl(\sum_{l=0}^\infty (-1)^{l}
\bigl(\tfrac{V_{l+1}}{T-Q}-V_{1,l}\bigr)\, \kappa_{l}  
	\\ \notag & \qquad\qquad\qquad \qquad\qquad\qquad\qquad
	+	\frac1{2} \sum_{k,l=0}^\infty
				 (-1)^{k+l}V_{k+1,l+1}
				b_* \bigl((\psi')^{k}\otimes (\psi'')^{l}\bigr)\Biggr).
	\end{align}
%
	For the exceptional terms we use here $\kappa_0|_{\oM_{g,0}} = 2g-2$; $b$ is the boundary morphism providing a $2:1$ parameterization of the boundary divisor, and $\psi',\psi''$ are the $\psi$ classes associated with the two branches of a curve at its singular point.
\end{proposition}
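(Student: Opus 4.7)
The plan is to lift the numerical computation of Proposition~\ref{prop:ancGW-KW} to the level of Chow classes on $\oM_{g,n}$, in the spirit of Givental's materialization argument. One starts from the localization formula~\eqref{eq:C(T)-formula-T}, multiplies by $(-1)^{g+1}Q^d$, sums over $d\ge 1$, and applies $p_*$. The outcome should reorganize as a sum over star rooted trees in $\SRT_{g,n}$ (with stable vertices and no degree data), with the $d$-sum absorbed into the generating functions $V_k$ and $V_{k,l}$ defined in~\eqref{eq:Vk-reminder}--\eqref{eq:Vkl-reminder}.

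First, for the edge contributions fix $\Gamma\in\SRT_{g,n}$ and a leaf vertex $v_i$. The corresponding factor in~\eqref{eq:C(T)-formula-T} is
\[
\sum_{d_i=1}^\infty Q^{d_i}\,\frac{d_i^{d_i-1}}{d_i!}\,
\frac{(d_i/T)^2\,(-T)^{g_i}\lambda_{g_i}\Lambda(1/T)}{(1+d_i\psi''/T)(1-d_i\psi'/T)},
\]
where $\psi'$ sits at the root half-edge and $\psi''$ at $v_i$. Expanding the two denominators as geometric series in $\psi',\psi''$, extracting $\lambda_{g_i}\lambda_a$ from $\Lambda(1/T)$, and using $\sum_{d} X^{d} d^{d-1}/d!\cdot(d/T)^{k+l+2}=V_{k+l+2}$ yields precisely the decoration $\mathcal{E}_i$ of~\eqref{eq:edge-contribution} after accounting for signs and $T$-weights.

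The root vertex contribution $\mathcal{V}$ is where materialization enters. After fixing the edges, extra marked points at $v_0$ that carry only a pure $\psi_{n+i}^k$-factor (i.e.\ no attached edge) can be forgotten, converting them into $\kappa$-classes via $\pi_*(\psi^{k+1})=\kappa_k$ and its symmetrizations. Summing over the number of such ``bare'' marked points together with the edge-induced $V_{k+2}$ insertions at $v_0$ exponentiates exactly into $\exp(\sum_i s_i\kappa_i)$ with the coefficients $s_i$ determined by $e^{-\sum s_i t^i}=1+\sum_k (T-Q)V_{k+1}t^k$. The remaining shifts at levels $k=0,1$ (the $V_0$ and $V_1-V_0|_{T}$ parts) are absorbed by the string and dilaton equations, producing the overall scalar factor $(T-Q)^{2g_0-2+n+|E|}$. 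This step parallels the last display of the proof of Proposition~\ref{prop:ancGW-KW}, now read tautologically.

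The main obstacle is the treatment of the exceptional $n\le 1$ terms, which arise from configurations where the root moduli space $\oM_{g_0,n+|E|}$ is unstable and handled only by bookkeeping conventions. For $n=1$ with $g_0=0,|E|=1$, the $\oM_{0,2}$ convention $1/(a+b)$ produces a class supported on $\oM_{g,1}$ with the single-sum $V_{l+1}\psi_1^l$ decoration. For $n=0$ two degenerations occur: the case $|E|=1$ with $\oM_{0,1}$ at the root contributes a $\kappa_l$-term, while the case $|E|=2$ with $\oM_{0,2}$ at the root, after symmetrizing over the two edges with the factor $1/2$, contributes a term supported on the boundary divisor $b_*((\psi')^k\otimes(\psi'')^l)$; both involve the primitive functions $V_{k,l}$, whose appearance is forced by the identity $(T-Q)Q^{-1}\partial_Q^{-1}V_{k+1}V_{l+1}=V_{k,l}$ reflecting the $d_1+d_2$ denominator of the $\oM_{0,2}$ convention. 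Checking that the formal contributions of these degenerate strata exactly reproduce the displayed $\delta_{n,0}$ and $\delta_{n,1}$ correction terms, in a way consistent with the stable sum, is the delicate combinatorial part of the argument.
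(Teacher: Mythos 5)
Your proposal follows essentially the same route as the paper's own proof: sum the localization formula~\eqref{eq:C(T)-formula-T} over all edge degrees, use the string and dilaton equations to remove the $\psi^0$ and $\psi^1$ insertions at genus-zero edges (which effects the substitution $Q/T\mapsto X=\tfrac{Q}{T}e^{-Q/T}$ and produces the factor $(T-Q)^{2g_0-2+n+|E|}$), exponentiate the remaining unstable-edge insertions into $\kappa$-classes at the root, and treat the unstable root configurations for $n\le 1$ separately via the unstable materialization conventions, with $V_{k,l}$ arising from the $(d_1+d_2)^{-1}$ denominator. The only point you leave as a claim --- that the degenerate strata reproduce exactly the displayed $\delta_{n,0}$ and $\delta_{n,1}$ corrections --- is resolved in the paper by invoking the unstable materialization equations of Janda (Equations (A.6) and (A.7) there) together with the restriction behaviour of $\lambda$-classes to boundary strata, so your outline is consistent with the paper's argument.
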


\begin{proof} It is convenient to abuse the notation and to think of $p_* \sum_{d=1}^\infty (-1)^{g+1}Q^d C_{g,n,d}(T)$ rather than of $\sum_{d=1}^\infty (-1)^{g+1} Q^d p_* C_{g,n,d}(T)$, though it is not a single push-forward but rather a system of push-forwards that depends on $d$. This allows us to take the sum over all labels $d\colon E\to \mathbb{Z}_{\geq 1}$ for each given pre-stable star rooted tree, with no condition on the sum of $d(e)$ over $e\in E$. For the class~\eqref{eq:C(T)-formula} rewritten in this way we assemble the dependence on $Q$ for each edge $e$ of genus $g(e)=h$
	into the series
\begin{equation}\label{eq:edge}
\begin{gathered}
\sum_{d=1}^\infty \frac{d^{d-1}}{d!} \Big(\frac{Q}{T}\Big)^d
	 \sum_{k,l=0}^\infty \sum_{a=0}^{h-1} (-1)^{h+l+1} T^{h-a}\Bigl(\frac{d}{T}\Bigr)^{k+l+2}
(\psi')^{k} \otimes (\psi'')^l\lambda_h \lambda_a |_{\oM_{h,1}},\quad h\ge1,
	 \\
-\sum_{d=1}^\infty \frac{d^{d-1}}{d!} \Big(\frac{Q}{T}\Big)^d
 \sum_{k=0}^\infty \Bigl(\frac{d}{T}\Bigr)^{k}
	   (\psi')^{k},\quad h=0.
\end{gathered}
\end{equation}
The remaining dependence on $T$ is the overall factor $ T^{2g_0-2+n+|E|}$ assigned to the root vertex.

Now we need to apply the push-forward to~$\oM_{g,n}$. Under this push-forward, we have to contract the edges that connect the root vertex to the unstable vertices of genus $0$ collected in the second summand in~\eqref{eq:edge} and to forget the corresponding points. We do it in two steps. On the first one we use the string and dilaton equation to eliminate all contributions as in the summands with $k=0$ and $k=1$ on the second line. Following~\cite[Appendix A.3]{Janda-P1} and the computations of the previous section, we conclude
that the effect of the string equation amounts to the formal substitution $Q/T \mapsto Q/Te^{-Q/T}$. Note that if $X=Q/Te^{-Q/T}$, then $\sum_{d=1}^\infty \frac{d^{d-1}}{d!} X^d=Q/T$ and $T^{-1}X\partial_X=Q/(T-Q)\partial_Q$. The dilaton equation multiplies the root vertex by $(1-Q/T)^{2g_0-2+n+|E|}$. Thus the combined effect of the string and the dilaton equation implies the following intermediate form of the edge contributions under the push-forward:
\begin{equation}\label{eq:edge-string}
\begin{gathered}
	 \sum_{k,l=0}^\infty \sum_{a=0}^{h-1} (-1)^{h+l+1} T^{h-a}
V_{k+l+2}~ (\psi')^{k} \otimes (\psi'')^l\lambda_h \lambda_a |_{\oM_{h,1}},\quad h\ge1,
	 \\
- \sum_{k=2}^\infty
    V_k~ (\psi')^{k},\quad h=0,
\end{gathered}
\end{equation}
where $V_k$ is the series in $X=Q/Te^{-Q/T}$ given by~\eqref{eq:Vk-reminder}, while the root vertex is further equipped with the coefficient $ (T-Q)^{2g_0-2+n+|E|}$.

At the next step we forget all edges with unstable vertices, which does not change the first line in~\eqref{eq:edge-string} but converts the second line into the following contribution to the root vertex expressed in terms of the kappa classes on the moduli space corresponding to the root vertex:
\begin{align}
	1+\sum_{m=1}^\infty \frac{(-1)^m (T-Q)^m}{m!} \sum_{a_1,\dots,a_m\geq 1} \kappa_{a_1,\dots,a_m} \prod_{i=1}^m  V_{a_i+1}.
\end{align}

Note that there is an exceptional case when within the push-forward the moduli space corresponding to the root vertex becomes unstable (or it was unstable initially). In this case, either $n=0$ or $n=1$.  If $n=1$, the above computation gives the following class on $\oM_{g,1}$:
\begin{align} \label{eq:exceptional}
	& \sum_{l=0}^\infty \sum_{a=0}^{g-1} (-1)^{g+l+1} T^{g-a}
 V_{l+1}~
 \psi_1^{l} \lambda_g \lambda_a .
\end{align}

If $n=0$, then we have to handle the cases of graphs in $\SRT_{g,0}(\bP^1,d)$ where $g_0=0$ and at most two non-root vertices have positive genus. In this case, we can use directly the materialization equations for the unstable cases, see~\cite[Equations (A.6) and (A.7)]{Janda-P1}, which give the exceptional $\delta_{n,0}$ terms in the form
\begin{multline}
		\sum_{l=0}^\infty \sum_{a=0}^{g-1} (-1)^{g+l} T^{g-a}
\bigl(\tfrac{V_{l+1}}{T-Q}-V_{1,l}\bigr)\; \kappa_{l} {\lambda_g \lambda_a} 
		+  \frac1{2} \sum_{\substack{g_1+g_2=g\\g_1,g_2\geq 1}} \sum_{l_1,l_2=0}^\infty \sum_{a_1=0}^{g_1-1}\sum_{a_2=0}^{g_2-1}
				\\
				 (-1)^{g+l_1+l_2} T^{g-a_1-a_2}V_{l_1+1,l_2+1}~
				(b_{g_1,g_2})_* \Big(\psi_1^{l_1} \lambda_{g_1}\lambda_{a_1}\otimes \psi_1^{l_2}\lambda_{g_2}\lambda_{a_2}\Big).
\end{multline}
where $b_{g_1,g_2}:\oM_{g_1,1}\times\oM_{g_2,1}\to\oM_{g,0}$ is the boundary morphism and $V_{k,l}$ is a series in $X=Q/Te^{-Q/T}$ given by~\eqref{eq:Vkl-reminder}. This expression is equivalent to that one of~\eqref{eq:MainFormula-Cgn} by the behavior of the $\lambda$ classes under restriction to the boundary strata.

Alternatively, one can use the following trick: exactly the same graphs but with one extra leg attached are also present in $\SRT_{g,0}(\bP^1,d)$, and the coefficients are proportional with the factor of $d$ (the latter observation is a corollary of the string equation, $\int_{\oM_{0,|E|}} \prod_{i=1}^{|E|} (1-d_i\psi_i)^{-1}  /  \int_{\oM_{0,1+|E|}} \prod_{i=1}^{|E|} (1-d_i\psi_{1+i})^{-1} = 1/d$). Thus, we can obtain the coefficients for these exceptional cases by applying $(Q\partial_Q)^{-1}$, which gives the same answer as a direct application of the materialization equations for the unstable cases.

An observation on the equivalence of the functions $V_k$ and $V_{k,l}$ represented as series in $X=Q/Te^{-Q/T}$ to their representations as rational functions in~$T$ and~$Q$ completes the proof.
\end{proof}

\subsection{Tautological relations} \label{sec:taut}
Denote by $C^{(p)}_{g,n}(T,Q)$ the homogeneous component of (cohomological) degree $p$ in the class $C_{g,n}(T,Q)$, that is, $C^{(p)}_{g,n}(T,Q)\in R^{p}(\oM_{g,n})[T,Q]$. By construction, it is a polynomial in $T$ and $Q$ of homogeneous degree~$2g-2+n-p$. In particular, it identically vanishes if $p > 2g-2+n$. The goal of this section is to analyze the tautological relations that these vanishings generate.

\subsubsection{Coefficients of classes}
We use notation $S\coloneqq T-Q$. The dependence of the terms entering the main formula~\eqref{eq:MainFormula-Cgn} on $T$ and $Q$ is through the functions $V_k$ and $V_{k,l}$ whose definitions in terms of $T$ and $S$ reads
\begin{align}
V_k&=\Bigl(\frac{S-T}{S}\partial_S\Bigr)^k\frac{T-S}{T},\\
V_{k,l}&=\Bigl(\frac{S-T}{S}\partial_S\Bigr)^{-1}V_{k+1}V_{l+1},
\end{align}
where the integration constant in the last formula is chosen such that $V_{k,l}|_{S=T}=0$.

\begin{lemma}
All the functions $V_k$, $k\ge1$, and $V_{k,l}$, $k,l\ge1$ are Laurent polynomials in~$T$ and~$S$ of homogeneous degrees $-k$ and $-k-l-1$, respectively.
\end{lemma}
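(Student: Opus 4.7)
The plan is to prove both claims inductively, exploiting the fact that the operator $D=\tfrac{S-T}{S}\partial_S$ preserves the ring of Laurent polynomials in $T,S$ and lowers homogeneous degree by one.

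For $V_k$ I will induct on $k\ge 1$, proving the sharper statement that $V_k=(S-T)W_k$ for a Laurent polynomial $W_k$ of homogeneous degree $-k-1$. The base case $V_1=1/S-1/T=-(S-T)/(ST)$ is immediate. The inductive step is a one-line calculation: $V_{k+1}=D\bigl((S-T)W_k\bigr)$ factors as $(S-T)W_{k+1}$ with $W_{k+1}=\bigl(W_k+(S-T)\partial_S W_k\bigr)/S$, manifestly a Laurent polynomial of degree $-k-2$. A parallel induction starting from $W_2=-1/S^3$ (which involves no negative power of $T$) shows the additional structural fact that for $j\ge 2$, every monomial $T^aS^b$ in $W_j$ satisfies $a\ge 0$; this is the key input for the second part.

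For $V_{k,l}$ with $k,l\ge 1$, the defining equation $DV_{k,l}=V_{k+1}V_{l+1}$ unpacks, using the factorization above, to
\[
\partial_S V_{k,l} \;=\; \tfrac{S}{S-T}\,V_{k+1}V_{l+1} \;=\; S(S-T)\,W_{k+1}W_{l+1},
\]
which is manifestly a Laurent polynomial. The antiderivative with boundary condition $V_{k,l}|_{S=T}=0$ will be a Laurent polynomial if and only if the coefficient of $S^{-1}$ in this integrand vanishes. Here the nonnegativity of $T$-powers enters: for $j=k+1,l+1$, the constraint $a\ge 0$ together with $a+b=-j-1$ forces $b\le -j-1$, so the largest power of $S$ appearing in $W_{k+1}W_{l+1}$ is at most $-(k+l+4)$. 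Multiplying by $S(S-T)$ raises this bound by at most $2$, to $-(k+l+2)\le -4$ for $k,l\ge 1$. Hence no term of $S$-power $\ge -3$ appears in the integrand; in particular the $S^{-1}$-coefficient vanishes, the antiderivative is a Laurent polynomial, and the integration constant needed to enforce the boundary condition (a single monomial of the form $cT^{-(k+l+1)}$ by homogeneity) preserves this structure. The final homogeneous degree is $-(k+l+1)$.

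The only real subtlety is the vanishing of the $S^{-1}$ residue in the second step — naively the antiderivative in $S$ could produce a spurious $\log S$ term. The factor-of-$(S-T)$ structure established in the first step, combined with the absence of negative $T$-powers in $W_j$ for $j\ge 2$, reduces this potential obstruction to the clean homogeneity bound above.
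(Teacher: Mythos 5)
Your proposal is correct. The paper itself offers no argument here --- it dismisses this lemma (together with Lemma~\ref{lem:T-regular}) as ``straightforward elementary computations'' --- so there is no written proof to compare against, but your write-up is a complete and valid way to carry the computation out. The part for $V_k$ really is immediate (the operator $D=\tfrac{S-T}{S}\partial_S$ preserves Laurent polynomials and lowers homogeneous degree by one), and your sharper factorization $V_k=(S-T)W_k$ with $W_k\in\bQ[S^{\pm1},T]$ for $k\ge2$ is exactly what is needed for the genuinely non-obvious point: that the antiderivative defining $V_{k,l}$ produces no $\log S$ term. Your degree bound showing every monomial of $S(S-T)W_{k+1}W_{l+1}$ has $S$-exponent at most $-(k+l+2)\le-4$ for $k,l\ge1$ correctly kills the $S^{-1}$ residue, and the treatment of the integration constant (a single monomial $cT^{-(k+l+1)}$ forced by homogeneity and the condition $V_{k,l}|_{S=T}=0$) is right; one can sanity-check the mechanism against the paper's explicit value $V_{1,0}=\tfrac1{2T^2}-\tfrac1{ST}+\tfrac1{2S^2}$. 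As a bonus, your auxiliary claim that $W_j$ has no negative powers of $T$ for $j\ge2$ also yields the first assertion of the paper's Lemma~\ref{lem:T-regular}.
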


\begin{corollary}
The contribution of each graph in $\SRT_{g,n}$ and each exceptional summand in~\eqref{eq:MainFormula-Cgn} to $C_{g,n}^{(p)}(T,Q)$ is a Laurent polynomial in~$T$ and~$S$ of homogeneous degree~$2g-2+n-p$.
\end{corollary}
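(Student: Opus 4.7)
The plan is a direct degree bookkeeping built on the preceding Lemma. Every factor appearing in~\eqref{eq:MainFormula-Cgn} is a Laurent polynomial in $T,S$ of an easily computed homogeneous $(T,S)$-degree: by the Lemma $\deg V_k = -k$ and $\deg V_{k,l} = -k-l-1$, while $T^{g_i-a}$, $(T-Q)^m$, and $(T-Q)^{2g_0-2+n+|E|}$ are monomials of obvious degree. The main point is therefore to check that, summand by summand, the $(T,S)$-degree plus the cohomological degree is a constant equal to $2g-2+n$, independent of the multi-indices; extracting the cohomological degree-$p$ component then yields the Corollary.

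For a graph $\Gamma\in\SRT_{g,n}$, I would tabulate the contribution on the source $\oM_{g_0,n+|E|}\times\prod_i\oM_{g_i,1}$ and then push forward. The root prefactor $(T-Q)^{2g_0-2+n+|E|}$ contributes the pair $(2g_0-2+n+|E|,\,0)$ of $(T,S)$-degree and cohomological degree. A multi-$\kappa$ monomial $(T-Q)^m V_{i_1+1}\cdots V_{i_m+1}\kappa_{i_1,\dots,i_m}$ inside $\mathcal{V}$ contributes $(-\sum i_j,\,\sum i_j)$, a net zero, and a single term $T^{g_i-a}V_{k+l+2}(\psi')^k\otimes(\psi'')^l\lambda_{g_i}\lambda_a$ of $\mathcal{E}_i$ contributes $(g_i-a-k-l-2,\,k+l+g_i+a)$, a net $2g_i-2$. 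Summing yields source total $2g-2+n-|E|$; the boundary pushforward $(b_\Gamma)_*$, whose image has codimension $|E|$ in $\oM_{g,n}$, raises the cohomological degree by exactly $|E|$, producing the target total $2g-2+n$, as required.

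The exceptional $n\le 1$ summands are handled by the same tabulation. For $n=1$, the piece $T^{g-a}V_{l+1}\lambda_g\lambda_a\psi_1^l$ has degrees $(g-a-l-1,\,g+a+l)$, summing to $2g-1=2g-2+n$. For $n=0$, the combinations $\bigl(V_{l+1}/(T-Q)-V_{1,l}\bigr)\kappa_l$ and $V_{k+1,l+1}b_*((\psi')^k\otimes(\psi'')^l)$, multiplied by $T^{g-a}\lambda_g\lambda_a$, inherit $(T,S)$-degrees $-l-2$ and $-k-l-3$ respectively from the Lemma; the codimension-one pushforward $b_*$ contributes one extra unit of cohomological degree in the second piece; in both pieces the net total is again $2g-2=2g-2+n$. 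Selecting the cohomological degree-$p$ component in every case produces a Laurent polynomial in $T,S$ of homogeneous degree $2g-2+n-p$, as claimed. The sole subtlety, and the main obstacle, is the codimension-$|E|$ shift built into $(b_\Gamma)_*$: without it the source total $2g-2+n-|E|$ would disagree with the target value by exactly $|E|$; once that shift is properly accounted for, the Corollary reduces to the transparent arithmetic above.
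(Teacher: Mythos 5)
Your degree bookkeeping is correct and is precisely the argument the paper intends (the corollary is left as an immediate consequence of the preceding lemma, with exactly this accounting: each vertex, edge, and exceptional factor contributes $(T,S)$-degree plus cohomological degree summing to $2g-2+n$ once the codimension-$|E|$ shift of $(b_\Gamma)_*$ is included). The only pedantic point worth noting is that the $n=0$ term involves $V_{1,0}$, which falls outside the lemma's stated range $k,l\ge1$ but is given explicitly in Lemma~\ref{lem:T-regular} and is homogeneous of degree $-2$, consistent with your formula.
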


The Laurent polynomials in $T$ and $S=T-Q$ should be thought of as infinite power series in~$Q$. However, since the total polynomial $C_{g,n}^{(p)}(T,Q)$ is represented as a finite sum of Laurent polynomials, the total contribution of each monomial in~$T$ and~$S$ which is of negative degree in either~$T$ or~$S$ should vanish, hence, it provides a tautological relation in $R^{p}(\cM_{g,n})$. In particular, if $p>2g-2+n$, then the vanishing of the total contribution of each monomial gives a tautological relation.

In fact, with certain exception for $n=0$, the monomials with negative exponent of $T$ never appear in computations, as the following lemma shows.

\begin{lemma}\label{lem:T-regular}
We have:
\begin{itemize}
\item $V_1=\frac{1}{S}-\frac{1}{T}$ and for $k\ge2$ the Laurent polynomial $V_k$ is regular in~$T$, that is,
\begin{equation}
V_k\in\bQ[S^{-1},T],\quad k\ge2.
\end{equation}
Moreover,
\begin{equation}\label{eq:Vk-at-T=0}
V_k\bigm|_{T=0}=(-1)^{k-1}\frac{(k-1)!}{S^k},\quad k\ge2.
\end{equation}
\item $V_{1,0}=\frac{1}{2T^2}-\frac{1}{ST}+\frac{1}{2S^2}$, and for $k,l\ge1$, up to one explicitly given monomial in $T$, the function $V_{k,l}$ is also regular in~$T$:
\begin{equation}\label{eq:Vkl-at-T=0}
V_{k,l}-\frac{(-1)^{k+1}B_{k+l}}{(k+l)T^{k+l+1}}\in\bQ[S^{\pm1},T],\quad k,l\ge1.
\end{equation}
\end{itemize}
Here $B_l$, $l\geq 0$, are the Bernoulli numbers given by
\begin{align}
	\sum_{l=0}^\infty B_l \frac{t^l }{l!}= \frac{te^t}{e^t-1}.
\end{align}
\end{lemma}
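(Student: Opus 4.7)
I would prove the statement for $V_k$ first, by induction on $k$. The base case $V_2 = T/S^3 - 1/S^2$ is computed directly from $V_1 = 1/S - 1/T$ by applying $\frac{S-T}{S}\partial_S$. Since the operator $\frac{S-T}{S}\partial_S = \partial_S - (T/S)\partial_S$ manifestly sends $\bQ[S^{-1},T]$ to itself, induction gives $V_k \in \bQ[S^{-1},T]$ for all $k \geq 2$. Being polynomial in $T$, the specialization $V_k|_{T=0}$ is well defined, and at $T=0$ the operator reduces to the plain $\partial_S$; starting from $V_2|_{T=0} = -1/S^2$, the induction $V_{k+1}|_{T=0} = \partial_S(V_k|_{T=0})$ yields $V_k|_{T=0} = (-1)^{k-1}(k-1)!/S^k$. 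The formula for $V_{1,0}$ I would verify by computing the antiderivative in the defining formula $V_{1,0} = \int_0^Q \frac{T-Q'}{Q'} V_2(Q') V_1(Q')\, dQ'$, which after simplification of the integrand to $(S'-T)/(T S'^3)$ integrates to the stated closed form.

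For $V_{k,l}$ with $k, l \geq 1$, I would pass to the variable $u = Q/T$ (so that $1 - u = S/T$) and observe that $W_k(u) \coloneqq T^k V_k$ depends only on $u$, satisfying the clean recursion $W_{k+1} = \frac{u}{1-u}\partial_u W_k$ with $W_0 = u$. A straightforward induction, based on the step identity $u/(1-u)^{b+2} = 1/(1-u)^{b+2} - 1/(1-u)^{b+1}$, shows that for $m \geq 2$ the function $W_m(u)$ is a $\bQ$-linear combination of $1/(1-u)^b$ with $b \in [m, 2m-1]$, carrying neither a constant term nor positive powers of $u$; in particular $W_m(0) = 0$. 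Setting $W_{k,l}(u) \coloneqq T^{k+l+1} V_{k,l}$, the definition becomes $W_{k,l}'(u) = \frac{1-u}{u} W_{k+1}(u) W_{l+1}(u)$ subject to $W_{k,l}(0) = 0$. Writing $W_{k+1} W_{l+1} = \sum_{b = k+l+2}^{2k+2l+2} A_b/(1-u)^b$ and using the partial-fraction identity $\frac{1}{u(1-u)^{b-1}} = \frac{1}{u} + \sum_{j=1}^{b-1} \frac{1}{(1-u)^j}$, the integrand $\frac{1-u}{u}W_{k+1}W_{l+1}$ rearranges as $\bigl(\sum_b A_b\bigr)/u + \sum_{j \geq 1} \bigl(\sum_{b \geq j+1} A_b\bigr)/(1-u)^j$. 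The vanishings $W_{k+1}(0) = W_{l+1}(0) = 0$ give $\sum_b A_b = 0$ and hence $\sum_{b \geq j+1} A_b = 0$ for every $j \leq k+l+1$; thus the integrand is supported on $j \in [k+l+2, 2k+2l+1]$ only. Integrating and fixing the constant by $W_{k,l}(0) = 0$ yields $W_{k,l}(u) = C_{k,l} + \sum_{m=k+l+1}^{2k+2l} e_m/(1-u)^m$ for some explicit rational $e_m$. Substituting $u = 1 - S/T$ gives $V_{k,l} = C_{k,l}/T^{k+l+1} + (\text{an element of } \bQ[S^{-1},T])$, which already establishes the asserted structure modulo the value of $C_{k,l}$.

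The substantive obstacle is identifying the integration constant $C_{k,l}$ with $(-1)^{k+1} B_{k+l}/(k+l)$; this is the hard part. The cleanest route I see is to assemble the bivariate exponential generating function $\sum_{k, l \geq 1} C_{k,l}\, x^k y^l/(k!l!)$. Using the identity $\sum_{k \geq 0} W_{k+1}(u)\, x^k/k! = \tilde u(x)/(1-\tilde u(x))$, where $\tilde u(x) = u(e^x X)$ is the Lambert-type flow defined by $\tilde u\, e^{-\tilde u} = e^x\, u\, e^{-u}$, one obtains the product generating function for $W_{k+1} W_{l+1}$; after antidifferentiation in $X$ and passing to the limit $u \to \infty$ (equivalently, the second branch of the Lambert correspondence, which extracts exactly the integration constant $C_{k,l}$), the closed form $\ln\bigl((e^x-1)(e^y-1)(y-x)/[xy(e^y-e^x)]\bigr)$ emerges. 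This matches the claimed expression upon invoking the classical identity $\ln((e^z-1)/z) = z/2 + \sum_{n \geq 1} B_{2n} z^{2n}/((2n)\cdot(2n)!)$ and the binomial rearrangement $y^n + (-x)^n - (y-x)^n = \sum_{k=1}^{n-1}(-1)^{k+1}\binom{n}{k} x^k y^{n-k}$, which isolate the degree-$n$ contribution as precisely $\sum_{k+l=n,\, k,l\geq 1} (-1)^{k+1} B_n/n \cdot x^k y^l/(k!l!)$, completing the identification.
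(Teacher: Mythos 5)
Your proposal is correct, and there is nothing in the paper to compare it against: the authors dispose of this lemma (and the one before it) with the single sentence that the proofs are ``straightforward elementary computations,'' so your write-up supplies an argument the paper omits. The first bullet and the structural part of the second bullet are complete and rigorous as written: the operator $\tfrac{S-T}{S}\partial_S=\partial_S-\tfrac{T}{S}\partial_S$ indeed preserves $\bQ[S^{-1},T]$ once one starts the induction at $V_2=T/S^3-1/S^2$ (not at $V_1$, which you correctly avoid), the reduction to $\partial_S$ at $T=0$ is valid because $V_k$ is polynomial in $T$, and the $u$-variable analysis of $W_{k,l}$ --- support of $W_m$ on $(1-u)^{-b}$, $b\in[m,2m-1]$, the partial-fraction rearrangement, and the cancellation of the $1/u$ term and of all $(1-u)^{-j}$ with $j\le k+l+1$ via $\sum_b A_b=0$ --- is exactly right and yields $V_{k,l}-C_{k,l}T^{-k-l-1}\in\bQ[S^{-1},T]$, which is even slightly stronger than the stated $\bQ[S^{\pm1},T]$.

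The only compressed step is the identification $C_{k,l}=(-1)^{k+1}B_{k+l}/(k+l)$, and your sketch of it is sound; I verified that it closes up. Two details should be written out. First, the explicit $X\partial_X$-antiderivative of $\tfrac{a}{1-a}\tfrac{b}{1-b}$ (with $a=\tilde u(x)$, $b=\tilde u(y)$) is $\ln\tfrac{(b-a)X}{ab}$, which rests on the identity $X\partial_X\ln(b-a)=\tfrac{1}{(1-a)(1-b)}$. Second, the naive $u\to\infty$ limit of the full generating function diverges because of the $k=0$ and $l=0$ rows (e.g.\ $W_{0,0}=-\ln(1-u)-u$), so before taking the limit one must pass to the inclusion--exclusion combination $\Psi(x,y)-\Psi(x,0)-\Psi(0,y)+\Psi(0,0)$, which isolates $k,l\ge1$; using $a\to u-x$, $b\to u-y$ on the second Lambert branch this limit evaluates to exactly your closed form $\ln\bigl((e^x-1)(e^y-1)(y-x)/[xy(e^y-e^x)]\bigr)$, and your Bernoulli/binomial expansion of that logarithm is correct. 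As a cross-check, the formula gives $C_{1,1}=B_2/2=1/12$, $C_{1,2}=C_{2,1}=0$, $C_{2,2}=-B_4/4=1/120$, all of which agree with direct computation of $W_{k,l}$, and the apparent asymmetry of $(-1)^{k+1}B_{k+l}$ in $k\leftrightarrow l$ is resolved by the vanishing of odd Bernoulli numbers. So: no gap, just one paragraph of the final step to be expanded.
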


The proof of both lemmas are straightforward elementary computations. \qed

Using the last lemma, one can extract some particular coefficients of monomials in~$T$ and $S=T-Q$ in~\eqref{eq:MainFormula-Cgn}. Our special interest is the coefficient of $T^0S^d$ for different $d$ (both positive and negative).

\begin{lemma}\label{lem:T=0contribution} Substitute $Q=T-S$ in the expression for $C_{g,n}(T,Q)$ given by~\eqref{eq:MainFormula-Cgn}. Then
for any $d\in\Z$ the coefficient of the monomial $T^0S^d$ in the resulting expression is equal to
\begin{equation}\label{eq:T=0contribution}
J_{2g-2+n-d}+\delta_{n,1}\delta_{d,0}(-1)^{g}\lambda_g\lambda_{g-1}
+\delta_{n,0}\delta_{d,0}(-1)^{g-1}\lambda_g\lambda_{g-2}\in R^{2g-2+n-d}(\oM_{g,n}),
\end{equation}
where the $J_p$ are the polynomials in~$\kappa$ classes defined by~\eqref{eq:J-first}--\eqref{eq:J-class}.
\end{lemma}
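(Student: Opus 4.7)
The plan is to extract the coefficient of $T^0 S^d$ from each of the three summands in~\eqref{eq:MainFormula-Cgn} separately, using Lemma~\ref{lem:T-regular} to control the only possible sources of negative powers of~$T$: the piece $-T^{-1}$ of $V_1$, the piece $\tfrac{1}{2T^2}$ of $V_{1,0}$, and the singular monomial $(-1)^{k+1}B_{k+l}/((k+l)T^{k+l+1})$ of $V_{k,l}$ for $k,l\ge1$.

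First I would show that every $\Gamma\in\SRT_{g,n}$ with $|E|\ge1$ is divisible by~$T$ and hence contributes nothing at $T^0$. Each edge factor $\mathcal{E}_i$ from~\eqref{eq:edge-contribution} carries the prefactor $T^{g_i-a}$ with $g_i\ge1$ and $0\le a\le g_i-1$, so $g_i-a\ge1$; the accompanying $V_{k+l+2}$ lies in $\mathbb{Q}[S^{-1},T]$ by Lemma~\ref{lem:T-regular}; and the root class $\mathcal{V}$ depends on~$T$ only through $S=T-Q$ and through $V_{i+1}$ with $i\ge1$, all polynomial in~$T$. Hence only the single-vertex graph with $|E|=0$ survives, for which $\mathcal{V}=S^{2g-2+n}\exp(\sum_i s_i^{(Q,T)}\kappa_i)$ with $s_i^{(Q,T)}$ defined by $\exp(-\sum s_i^{(Q,T)}t^i)=1+\sum_{k\ge1}S V_{k+1} t^k$. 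Substituting $V_{k+1}|_{T=0}=(-1)^k k!/S^{k+1}$ turns the right-hand side into $\sum_{k\ge0}(-1)^k k!(t/S)^k$, matching~\eqref{eq:J-first} under $t\mapsto t/S$, so $s_i^{(Q,T)}|_{T=0}=s_i/S^i$. Since $J_p$ is homogeneous of cohomological degree~$p$ in $\kappa$-classes,
\begin{equation*}
\mathcal{V}\bigm|_{T=0}\;=\;S^{2g-2+n}\exp\Bigl(\textstyle\sum_{i\ge1} s_i\kappa_i S^{-i}\Bigr)\;=\;\sum_{p\ge0}J_p\,S^{2g-2+n-p},
\end{equation*}
whose coefficient at $S^d$ is exactly $J_{2g-2+n-d}$, the principal term of the lemma.

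For the $n=1$ exceptional summand, Lemma~\ref{lem:T-regular} gives $V_{l+1}\in\mathbb{Q}[S^{-1},T]$ for $l\ge1$, so $T^{g-a}V_{l+1}$ is divisible by~$T$ and contributes nothing at $T^0$. Only the $-T^{-1}$ piece of $V_1$, combined with $T^{g-a}$, produces a $T^0$ contribution, and only when $a=g-1$; tracking the signs $(-1)^g$ and $(-1)^{l+1}|_{l=0}=-1$ yields exactly the $S$-independent class $(-1)^g\lambda_g\lambda_{g-1}$, reproducing the $\delta_{n,1}\delta_{d,0}$ correction.

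The $n=0$ exceptional summand is the genuine obstacle. Here both $V_{l+1}/S-V_{1,l}$ (in the $\kappa_l$ sum) and $V_{k+1,l+1}$ (in the boundary sum) carry singular $T$-monomials by~\eqref{eq:Vkl-at-T=0}, and additionally the $-1/(2T^2)$ of $V_{1,0}$ surviving the cancellation $V_1/S-V_{1,0}=1/(2S^2)-1/(2T^2)$ combines with $\kappa_0|_{\oM_{g,0}}=2g-2$. The plan is to decompose each $V_{i,j}$ as its explicit singular $T$-monomial plus a remainder in $\mathbb{Q}[S^{\pm1},T]$, pair the singular $T^{-m}$ pieces with the edge prefactors $T^{g-a}$ and $T^{g-a_1-a_2}$, and collect the resulting $T^0$ residues; all such residues are $S$-independent because the singular Bernoulli monomials carry no~$S$. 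The delicate step, which requires the boundary restriction identity $b^*(\lambda_g\lambda_a)=\sum_{a_1+a_2=a}\lambda_{g_1}\lambda_{a_1}\otimes\lambda_{g_2}\lambda_{a_2}$ on $\oM_{g_1,1}\times\oM_{g_2,1}$ together with elementary Bernoulli identities, is to verify that the higher-$\kappa_l$ contributions from the $\kappa$-sum telescope against the higher-genus boundary contributions, leaving only the single residue $(-1)^{g-1}\lambda_g\lambda_{g-2}$ at $(a,l)=(g-2,0)$. This combinatorial cancellation, which constitutes the $n=0$ case of Theorem~\ref{thm:n10}, is the core technical task.
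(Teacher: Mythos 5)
Your treatment of the main sum over $\SRT_{g,n}$ and of the $n=1$ exceptional term is correct and coincides with the paper's own argument: the edge prefactors $T^{g_i-a}$ with $g_i-a\ge1$ kill every graph with an edge at $T^0$, the single-vertex graph gives $\mathcal{V}\bigm|_{T=0}=\sum_p J_p\,S^{2g-2+n-p}$ exactly as you compute it, and for $n=1$ only the $-T^{-1}$ piece of $V_1$ at $(l,a)=(0,g-1)$ survives, yielding $(-1)^g\lambda_g\lambda_{g-1}$.

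The $n=0$ case, however, is a genuine gap: you correctly isolate it as ``the core technical task'' and then do not carry it out, and the claimed collapse to the single class $(-1)^{g-1}\lambda_g\lambda_{g-2}$ is precisely what needs proof here. Moreover, the tools you propose are not the right engine. The restriction $b^*(\lambda_g\lambda_a)=\sum_{a_1+a_2=a}\lambda_{g_1}\lambda_{a_1}\otimes\lambda_{g_2}\lambda_{a_2}$ is already what brings the $n=0$ term into the form~\eqref{eq:MainFormula-Cgn}, and the residual sum does not telescope by elementary Bernoulli identities alone. The mechanism the paper uses is Mumford's formula: the $T$-singular Bernoulli monomials of $V_{k+1,l+1}$ depend on $(k,l)$ only through $k+l$ up to the sign $(-1)^l$, so the boundary sum reorganizes (for each odd index $2j+1$) into $\tfrac12\, b_*\tfrac{(\psi')^{2j+1}+(\psi'')^{2j+1}}{\psi'+\psi''}$, which combines with the $\kappa_{2j+1}$ terms into $\mu_{2j+1}=\kappa_{2j+1}+\tfrac12\, b_*\tfrac{(\psi')^{2j+1}+(\psi'')^{2j+1}}{\psi'+\psi''}$. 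Mumford's identity $\exp\bigl(\sum_k\tfrac{B_{2k}}{(2k-1)2k}\mu_{2k-1}t^{2k-1}\bigr)=\sum_a\lambda_a t^a$, after applying $t\partial_t$, gives $\bigl(\sum_k\tfrac{B_{2k}}{2k}\mu_{2k-1}t^{2k-1}\bigr)\sum_a\lambda_a t^a=\sum_a a\,\lambda_a t^a$, and it is this identity that evaluates the whole $n=0$ contribution to $(1-g)(-1)^g\lambda_g\lambda_{g-2}+(g-2)(-1)^g\lambda_g\lambda_{g-2}=(-1)^{g-1}\lambda_g\lambda_{g-2}$. Without this input (or an equivalent Grothendieck--Riemann--Roch computation) the $\delta_{n,0}$ term of the lemma is not established; your identification of the surviving $-\tfrac{1}{2T^2}\kappa_0$ piece from $V_1/S-V_{1,0}$ is correct but accounts only for the $(1-g)(-1)^g\lambda_g\lambda_{g-2}$ half of the answer.
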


\begin{proof}
Since $V_k$ is regular in~$T$ for $k\ge2$, all summands in the main term in~\eqref{eq:MainFormula-Cgn} are also regular in~$T$. Moreover, since the exponent $g_i-a$ in~\eqref{eq:edge-contribution} is strictly positive, all graphs in $\SRT_{g,n}$ with a nonempty edge set provide trivial contributions to the coefficient of $T^0$. The only graph that provides a nontrivial contribution is the graph with a single vertex of genus $g$ and no edges. By~\eqref{eq:root-contribution}, where we substitute the values of $V_k$ given in~\eqref{eq:Vk-at-T=0}, the contribution of this graph is equal exactly to the corresponding~$J$-class, as presented in~\eqref{eq:J-class}.

By the same reason, a nontrivial contribution of the exceptional terms for~$n=1$ in~\eqref{eq:MainFormula-Cgn} to the coefficient of $T^0$ is possible for $l=0$ and $a=g-1$ only, and this contribution is equal to~$(-1)^{g}\lambda_g\lambda_{g-1}$.

In opposite to the cases considered above, the exceptional summand with $n=0$ in~\eqref{eq:MainFormula-Cgn} is not regular in~$T$, so we need a more careful computation here. Using~\eqref{eq:Vkl-at-T=0} and ignoring the terms regular in~$T$ in the third line in~\eqref{eq:MainFormula-Cgn}, we compute the contribution of the exceptional summand with~$n=0$ to the coefficient of $T^0$ equal to
\begin{multline}\label{eq:n=0proof1}
[T^0]\sum_{a=0}^{g-1} (-1)^{g} T^{g-a}{\lambda_g \lambda_a}
\biggl(-\frac{\kappa_0}{2T^2}+\sum_{l=1}^\infty
\frac{B_{l+1}}{(l+1)T^{l+2}}\, \kappa_{l}
		\biggr.
\\
+\biggl.\frac1{2} \sum_{k,l=0}^\infty
				 (-1)^{k}\frac{B_{k+l+2}}{(k+l+2)T^{k+l+3}}
				b_* \bigl((\psi')^{k}\otimes (\psi'')^{l}\bigr)\biggr)
\\=(1-g)(-1)^g\lambda_g\lambda_{g-2}+[T^0]\sum_{a=0}^{g-1} (-1)^{g} T^{g-a-2}{\lambda_g \lambda_a}
\sum_{k=1}^\infty\frac{B_{2k}}{2k}\frac{\mu_{2k-1}}{T^{2k-1}},
\end{multline}
where the classes $\mu_l$ for odd~$l$ are defined by
\begin{equation}
\mu_l=\kappa_l+\frac12b_*\frac{(\psi')^l+(\psi'')^l}{\psi'+\psi''}.
\end{equation}
Recall that by Mumford's formula~\cite{Mumford}, we have
\begin{equation}
e^{\sum_{k=1}^\infty\frac{B_{2k}}{(2k-1)2k}\mu_{2k-1}t^{2k-1}}=\sum_{a=0}^g\lambda_at^a.
\end{equation}
Applying $t\partial_t$ we get as a corollary the identity
\begin{equation}
\biggl(\sum_{k=1}^\infty\frac{B_{2k}}{2k}\mu_{2k-1}t^{2k-1}\biggr)
\sum_{a=0}^g\lambda_at^a=\sum_{a=0}^ga\,\lambda_at^a.
\end{equation}
By this identity,~\eqref{eq:n=0proof1} can be rewritten as
\begin{multline}
(1-g)(-1)^g\lambda_g\lambda_{g-2}+[T^0](-1)^{g} T^{g-2}\lambda_g
\sum_{a=0}^ga\,\frac{\lambda_a}{T^a}\\
=(1-g)(-1)^g\lambda_g\lambda_{g-2}+(g-2)(-1)^g\lambda_g\lambda_{g-2}=-(-1)^{g}\lambda_g\lambda_{g-2},
\end{multline}
which agrees with~\eqref{eq:T=0contribution}.
\end{proof}

\subsubsection{Proofs of the main theorems}
For $d<0$ the equality of Lemma~\ref{lem:T=0contribution} is a tautological relation: the total coefficient of $T^0S^d$ in the expression for the class $C_{g,n}(T,Q)$ for negative $d$ must be equal to zero.

If $d=0$, then the monomial $T^0S^d$ is a constant. Hence, the total contribution to this monomial should also vanish since the class $C_{g,n}(T,Q)$ does not contain the constant terms in the expansion in~$Q$, by definition.

This gives immediately the proofs of Theorems~\ref{thm:main} and \ref{thm:n10}.

In the case $d>0$, Lemma~\ref{lem:T=0contribution} is also applicable. In this case, it provides not a tautological relation any more but rather an explicit formula for the corresponding term of $C_{g,n}$ immediately implying the $d>0$ part of the statement of Theorem~\ref{thm:C=J}.

Finally, for $d=0$ case of Theorem~\ref{thm:C=J}, we compute the classes explicitly without applying localization, just directly from its definition (see~\cite{KL,KiemLi} and~\cite[Section 1.1]{giacchetto2024spingromovwittenhurwitzcorrespondencemathbbp1}). To this end, we use the following trick: we consider $(\bP,\cO(-1))$ as the normal bundle of the exceptional divisor $\tilde X$ in $\tilde Y$ obtained as a blow-up at one point of a K3 surface $Y$. Then we can just apply the computation of~\cite[Section 2.2]{GetPan}. We have $c_1(\tilde Y) = -[\tilde X]$, and
$[\oM_{g,n}(\tilde Y,0)]^{\mathrm{vir}} = [\oM_{g,n}
\times \tilde Y] \cap (\lambda_g\lambda_{g-1} [\tilde X] + \lambda_{g}\lambda_{g-2} [\tilde X]^2)$. Then for $p\colon \oM_{g,n}
\times \tilde Y \to \oM_{g,n}$ we have
\begin{align} \label{eq:d0-comuptations-v1}
	(-1)^{g+1} p_* \Big( [\oM_{g,1}
	\times \tilde Y] \cap (\lambda_g\lambda_{g-1} [\tilde X] + \lambda_{g}\lambda_{g-2} [\tilde X]^2) \cap ev_1^*(-[\tilde X]) \Big) & = (-1)^{g-1} \lambda_{g} \lambda_{g-1};
	\\ \notag
	(-1)^{g+1} p_*  \Big( [\oM_{g,0}
	\times \tilde Y] \cap (\lambda_g\lambda_{g-1} [\tilde X] + \lambda_{g}\lambda_{g-2} [\tilde X]^2) \Big) & = (-1)^{g} \lambda_g \lambda_{g-2}.
\end{align}
\qed

\begin{remark}
	Alternatively, one can use~\cite[Theorem 1.1]{KiemLi} in combination with~\cite[Equation~(2.6)]{KiemLi}. This theorem states that in the case of $d=0$ we have to replace the class $c_{g-1+d} (R^1\pi_*f^*\cO(-1))$ in Equation~\eqref{eq:MainClass} by the class $-c_{g-1} (R^1\pi_*f^*\cO(-1) - R^0\pi_* f^*\cO(1))$. On $\oM_{g,n}(\bP^1,0) = \oM_{g,n}\times \bP^1$ the latter class is equal to $(-1)^{g-1}(\lambda_{g-1}\otimes 1 + 3 \lambda_{g-2} \otimes [\mathrm{pt}])$. Recall that $[ \oM_{g,n}(\bP^1,0)]^{\mathrm{vir}} = (-1)^{g-1} (\lambda_{g}\otimes 1 - 2 \lambda_{g-1}\otimes [\mathrm{pt}])$~\cite[Section 2.1]{GetPan} and in the case $n=1$ we have $ev_1^*([\mathrm{pt}])= 1\otimes [\mathrm{pt}]$. Then for $p\colon \oM_{g,n}\times \bP^1\to \oM_{g,n}$, for $n=0,1$, we have:
	\begin{align}
		 (-1)^{g+1} p_* \Big([\oM_{g,1}\times \bP^1]\cap
		(1\otimes [\mathrm{pt}]) \cdot (-1)^{g-1} (\lambda_{g}\otimes 1 - 2 \lambda_{g-1}\otimes [\mathrm{pt}]) \cdot \quad & \\ \notag   (-1)^{g-1}(\lambda_{g-1}\otimes 1 + 3 \lambda_{g-2} \otimes [\mathrm{pt}])
		\Big)  & = (-1)^{g-1} \lambda_g\lambda_{g-1};
		\\ \notag
		(-1)^{g+1} p_* \Big([\oM_{g,0}\times \bP^1]\cap
		 (-1)^{g-1} (\lambda_{g}\otimes 1 - 2 \lambda_{g-1}\otimes [\mathrm{pt}]) \cdot \quad & \\ \notag   (-1)^{g-1}(\lambda_{g-1}\otimes 1 + 3 \lambda_{g-2} \otimes [\mathrm{pt}])
		 & = (-1)^g \lambda_g\lambda_{g-2}
	\end{align}
	as we have already checked in Equation~\eqref{eq:d0-comuptations-v1}.
\end{remark}

\section{Equivariant topological recursion}

The purpose of this section is to explain where the computations of the previous section are originated from. The original conjecture on kappa classes in~\cite{KN-conjecture} is linked in \emph{op.~cit.} to a regularity question of a system of differentials obtained by topological recursion of Chekhov-Eynard-Orantin~\cite{EO-TR}. In this section we give a reformulation of the setup for topological recursion proposed in~\cite{KN-conjecture} in terms of the so-called $\mathbb{Z}_2$-equivariant topological recursion introduced in~\cite{GKL}.

We prove that this $\mathbb{Z}_2$-equivariant topological recursion describes the stationary sector of the spin Gromov-Witten theory of $\bP^1$, thus establishing the regularity proposed in~\cite[Section 2.2.1]{KN-conjecture}. This proof is in fact a numerical avatar of the argument in the previous section that we presented in Section~\ref{sec:numerical}.

\subsection{Topological recursion and its equivariant version}
First, let us recall the definition of the original topological recursion of Chekhov-Eynard-Orantin \cite{CEO,EO-TR}. It associates a system of meromorphic differentials $\omega^{(g)}_n$ (also known as $n$-point differentials), $g\geq 0$, $n\geq 1$, $2g-2+n\geq 0$, to an input data (the \emph{spectral curve data}) that consists of a Riemann surface $\Sigma$ and a finite set of points $\cP\subset \Sigma$, two functions $x$ and $y$ on $\Sigma$  such that $dx$ and $dy$ are meromorphic differentials on $\Sigma$ with $q\in\cP$ being {simple} critical points of $x$, and $\restr {} {q} {dy}\not= 0$ for each $q\in \cP$, and a bi-differential $B$ with a double pole on the diagonal with biresidue $1$ and no other poles. 

The symmetric differentials $\omega^{(g)}_{n}$, $g\geq 0$, $n\geq 1$, are constructed as follows:
\begin{align}\label{eq:om0102}
	\omega_{0,1}(z_1) = y(z_1) dx(z_1) ; \qquad	
	\omega_{0,2}(z_1,z_2) = B(z_1,z_2);
\end{align}
and for $2g-2+n>0$ the following recursion is used:
\begin{align}\label{eq:TopologicalRecursion}
	\omega_{g,n} (z_1,\dots,z_n) \coloneqq \ & \frac 12 \sum_{\xi\in\cP} \res_{z\to \xi}
	\frac{\int_z^{\sigma_\xi(z)} B(z_1,\cdot)} {(y(\sigma_\xi(z)) -  y(z))\,dx(z) }\Bigg(
	\omega_{g-1,n+1}(z,\sigma_\xi(z),z_{\llbracket n \rrbracket \setminus \{1\}})
	\\
	\notag
	& + \sum_{\substack{g_1+g_2 = g, I_1\sqcup I_2 = {\llbracket n \rrbracket \setminus \{1\}} \\
			(g_1,|I_1|),(g_2,|I_2|) \not= (0,0) }} \omega_{g_1,1+|I_1|}(z,z_{I_1})\omega_{g_2,1+|I_2|}(\sigma_\xi(z), z_{I_2})\Bigg),
\end{align}
where  $\sigma_\xi$ is the deck transformation of $x$ near $\xi$ (that is, it is a local involution such that $x(\sigma_\xi(z))=x(z)$) and $z_I$ denotes $\{z_i\}_{i\in I}$ for $I\subseteq \set{n}$.

Now recall the definition of equivariant topological recursion introduced by Giacchetto-Kramer-Lewa\'nski \cite[Definition~8.1]{GKL}:
\begin{definition}\label{defn:equiv:SC}
	Let $G$ be a finite group. A \emph{$G$-equivariant spectral curve data}
\begin{align}
	(\Sigma,\phi, x,y,B, \chi, \upsilon, \beta)	
\end{align}	
 consists of
	\begin{itemize}
		\item a Riemann surface $\Sigma$ (not necessarily compact or connected) with a free action $ \phi \colon G \times \Sigma \to \Sigma $, for which the following notation is used: $\phi (\gamma,z) = \phi_\gamma z = \gamma z$, $\gamma\in G$, $z\in \Sigma$;
		\item a function $x \colon \Sigma \to \C$ such that its differential $dx$ is meromorphic and has finitely many zeros $a_1,\dots,a_r$ that are simple;
		\item a meromorphic function $y \colon \Sigma \to \C$ such that $d y$ does not vanish at the zeros of $d x$;
		\item a symmetric bidifferential $B$ on $\Sigma \times \Sigma$;
		\item three one-dimensional representations $ \chi, \upsilon, \beta \colon G \to \C^*$,
	such that for any $ \gamma \in G$
	\begin{equation}\label{Eqdefs}
		dx (\gamma z) = \chi_\gamma \, dx (z) \,, \qquad dy (\gamma z) = \upsilon_\gamma \, dy(z)\,, \qquad B(\gamma z_1,z_2) = \beta_\gamma \, B(z_1,z_2)\,,
	\end{equation}
	and $ B(z_1,z_2) - B^{G,\beta}(z_1,z_2) $ is regular as $ z_1 \to \gamma z_2$, where
	\begin{equation}\label{eq:Bgb}
		B^{G,\beta}(z_1,z_2) \coloneqq 
		\sum_{\eta \in G} \beta_\eta^{-1} \frac{d (\eta z_1) dz_2}{(\eta z_1 - z_2)^2}\,.
	\end{equation}
		\end{itemize}
	This version of  
	{topological recursion} constructs the $n$-point differentials via the usual formula~\eqref{eq:TopologicalRecursion}, whereas for the unstable cases it is natural to define them as
	\begin{align}
		\omega^{(0)}_1(z_1)&  = \frac{1}{|G|} y(z_1) dx(z_1); & \omega^{(0)}_2(z_1,z_2) & = B(z_1,z_2).
	\end{align}
\end{definition}
\begin{remark}
	Definition~\ref{defn:equiv:SC} as given here differs from the one given in \cite{GKL} by the absence of the factor $|G|^{-1}$ in formula~\eqref{eq:Bgb} as compared to the respective formula in~\cite[Definition~8.1]{GKL}. This factor amounts to a simple rescaling of $n$-point differentials by some factors of $2$ and its absence or presence does not affect any properties; for our purposes it is more convenient not to have it.
	
	We also slightly modified \eqref{Eqdefs}, namely, we formulate it as a condition on the differential $dy$ comparing to the condition on the function $y$ in \cite[Definition~8.1]{GKL}. This definition is slightly more general, but due to the way $y$ enters~\eqref{eq:TopologicalRecursion}  all statements regarding equivariant topological recursion hold for a curve with such a relaxed assumption. We need this relaxed condition in the example below.
\end{remark}

An example of the spectral curve data for the original topological recursion that is of primary interest for this paper is given in \cite[Section~2.2.1]{KN-conjecture} (and has its roots in~\cite{Eynard-Intersection}):
\begin{align}\label{eq:SigmaKN}
	\Sigma&=\mathbb{P}^1;
	& x&=\frac{w^2}{2};
	& y&=\frac{2\arcsinh(w/\sqrt{2\epsilon})}{\sqrt{w^2+2\epsilon}};
	&	B  &=\frac{dw_1dw_2}{(w_1-w_2)^2}. 
\end{align}

An example of a $G$-equivariant spectral curve data is given in Theorem~\ref{thm:TR-spin}. It is a $\mathbb{Z}_2$-equivariant spectral curve data according to Definition~\ref{defn:equiv:SC}. Namely, we have
\begin{align}\label{eq:GZ2}
G&=\mathbb{Z}_2; \quad
 \Sigma=\mathbb{P}^1; \quad
 \iota(z)=-\frac{1}{z};\\
  x&=\sqrt{\epsilon/2} \left(z+\frac 1z\right); \quad
y=2\log z; \quad
B  =\frac{dz_1dz_2}{(z_1-z_2)^2} + \frac{d\iota(z_1)dz_2}{(\iota(z_1)-z_2)^2},
\end{align}
where $\iota$ denotes the action of the generator of $\mathbb{Z}_2$ (that is $\iota (z) = \phi(1,z)$, where $\mathbb{Z}_2$ is $\{0,1\}$ with 0 being the group unit); and we have replaced $Q$ with $-\epsilon$. For this $\mathbb{Z}_2$-equivariant spectral curve $\chi=\upsilon$ is the sign representation and $\beta$ is the trivial representation of $\mathbb{Z}_2$.

Now we want to compare the $n$-point differentials produced by $\mathbb{Z}_2$-equivariant topological recursion on the curve~\eqref{eq:GZ2} with the ones produced by topological recursion on the curve~\eqref{eq:SigmaKN}. In order not to get confused, let us mark all objects associated with~\eqref{eq:SigmaKN} with tilde (that is, for that curve we will write $\tilde \Sigma$, $\tilde x$, $\tilde y$, $\tilde B$, $\tilde \omega^{(g)}_n$), while keeping the notation for~\eqref{eq:GZ2} (with the $n$-point differentials produced by $\mathbb{Z}_2$-equivariant topological recursion on~\eqref{eq:GZ2} denoted by $\omega^{(g)}_n$).

The following proposition implements this comparison:
\begin{proposition} The systems of differentials $\{\tilde \omega^{(g)}_n\}$ and $\{\omega^{(g)}_n\}$ are related by the following formula:
\begin{align}
\prod_{i=1}^n \restr{w_i}{\sqrt{\epsilon/2}(z_i-z_i^{-1})}	2^{2g-2+n} \tilde \omega^{(g)}_n(w_1,\dots,w_n) = \omega^{(g)}_n(z_1,\dots,z_n).
\end{align}
\end{proposition}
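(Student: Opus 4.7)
The plan is to prove the identity by induction on $2g - 2 + n$ using the substitution $w = \sqrt{\epsilon/2}(z - z^{-1})$, which defines a $2{:}1$ ramified covering $\pi \colon \Sigma \to \tilde\Sigma$ with deck transformation $\iota(z) = -1/z$. The unique critical point $w = 0$ of $\tilde x = w^2/2$ lifts to the two critical points $z = \pm 1$ of $x = \sqrt{\epsilon/2}(z + z^{-1})$; at each preimage, the local involution $\sigma(z) = 1/z$ of $x$ corresponds to the sheet exchange $w \leftrightarrow -w$ of $\tilde x$.

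The base cases are checked directly. From $w^2 + 2\epsilon = x^2$ and $\sinh(\log z) = (z - z^{-1})/2$ one computes $\pi^*(\tilde y\, d\tilde x) = y\, dx$, giving $2^{-1}\pi^*\tilde\omega^{(0)}_1 = \tfrac{1}{2}y\, dx = \omega^{(0)}_1$ (with $|G|=2$), while the algebraic identity $(z_1 - z_2)^2 + (1 + z_1 z_2)^2 = (1 + z_1^2)(1 + z_2^2)$ yields $\pi^*\tilde B = B$. For the induction step, I would first establish that the TR kernel pulls back correctly: from $\pi^* d\tilde x = x\, dx$ and $\pi^* \tilde y = y/x$ one obtains $\pi^*[(\tilde y(-w) - \tilde y(w))\, d\tilde x(w)] = (y(\sigma(z)) - y(z))\, dx(z)$, and together with $\pi^* \tilde B = B$ this implies that $\pi^*\tilde K$ coincides, near each preimage $\xi = \pm 1$, with the equivariant TR kernel $K$.

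Applying the recursion to $\tilde\omega^{(g)}_n$, substituting external variables $w_i = w(z_i)$, and changing the residue variable from $w$ to $z$, one obtains $\pi^* \tilde\omega^{(g)}_n = \tfrac{1}{2}\res_{z \to 1}(\pi^*[\tilde K \cdot \{\ldots\}])$, since $\pi$ is a local diffeomorphism at each preimage of $w=0$ (so the single residue on $\tilde\Sigma$ equals the residue at either preimage on $\Sigma$). Invoking the induction hypothesis, every internal $\tilde\omega^{(g')}_{n'}$ acquires a factor $2^{-(2g'-2+n')}$; a direct count shows that both the term $\tilde\omega^{(g-1)}_{n+1}(w, -w, w_I)$ and each stable-splitting product pick up the uniform factor $2^{-(2g+n-3)}$.

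On the equivariant side, $\omega^{(g)}_n = \tfrac{1}{2}\sum_{\xi = \pm 1}\res_{z \to \xi}(K \cdot \{\ldots\})$. The integrand is $\iota$-invariant in the residue variable $z$: this follows from $(\mathrm{id} \times \iota)^* B = B$, the sign cancellation $\iota^*[(y(\sigma(z)) - y(z))\, dx] = (y(\sigma(z)) - y(z))\, dx$ (both factors flip sign), and, inductively, the $\iota$-invariance in each argument of the lower differentials $\omega^{(g')}_{n'}$ for $2g'-2+n' > 0$. Hence the two residues coincide, $\omega^{(g)}_n = \res_{z \to 1}(K \cdot \{\ldots\})$, and combining yields $\pi^* \tilde\omega^{(g)}_n = \tfrac{1}{2} \cdot 2^{-(2g+n-3)} \cdot \omega^{(g)}_n = 2^{-(2g-2+n)}\omega^{(g)}_n$, which is the claim. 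The principal subtlety is precisely this factor-of-two bookkeeping: the single residue on the quotient $\tilde\Sigma$ equals each of the two equal residues of the equivariant sum on $\Sigma$, and iterating this through the recursion generates exactly the renormalization $2^{2g-2+n}$.
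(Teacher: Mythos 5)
Your proof is correct, and its geometric core is the same as the paper's: both arguments rest on the substitution $w=\sqrt{\epsilon/2}(z-z^{-1})$, the identities $w^2+2\epsilon=x^2$ and $\arcsinh(w/\sqrt{2\epsilon})=\log z$ giving $\pi^*(\tilde y\,d\tilde x)=y\,dx$ near $z=1$, and the pullback identity $\pi^*\tilde B=B$. The genuine difference is in how the factor $2^{2g-2+n}$ is obtained: the paper observes that the recursion on the curve \eqref{eq:SigmaKN} is the \emph{reduced} version of the full $\mathbb{Z}_2$-equivariant recursion (localized at the single critical point $z=1$) and then cites \cite[Proposition 8.8]{GKL}, whereas you reprove that reduction statement from scratch by induction on $2g-2+n$. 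Your bookkeeping checks out: the kernel pulls back on the nose (the additive multivaluedness of $y=2\log z$ is harmless since only the difference $y(\sigma(z))-y(z)$ enters, and the two signs from $dy\mapsto-dy$ and $dx\mapsto-dx$ cancel); the $\iota$-invariance of the integrand equates the residues at $z=\pm1$, with the needed invariance of the lower stable $\omega^{(g')}_{n'}$ supplied by the induction hypothesis itself (anything of the form $\pi^*\tilde\omega$ is automatically $\iota$-invariant); and the uniform factor $2^{-(2g+n-3)}$ from the bracket (which correctly covers the $\omega^{(0)}_2$ terms with factor $2^0$) combined with the $\tfrac12$ from the single-residue recursion gives exactly $2^{-(2g-2+n)}$. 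What your version buys is self-containedness, at the cost of redoing in this special case an argument available in general in \cite{GKL}. One small caution: the assertion that $\pi^*\tilde K$ agrees with $K$ ``near each preimage $\xi=\pm1$'' is delicate near $z=-1$ because of the branch of $\arcsinh$, but it is also unnecessary --- your argument only ever computes the residue at $z=1$, where the identification is clean, so nothing is lost.
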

\begin{proof}
	Consider $\tilde yd\tilde x$ 
	(that is, $ydx$ for the spectral curve \eqref{eq:SigmaKN}):
	\begin{equation}
		\tilde y d \tilde x 
		= \frac{2\arcsinh(w/\sqrt{2\epsilon})}{\sqrt{w^2+2\epsilon}}  d\frac{w^2}{2},
	\end{equation}	
	where $w$ is a global rational coordinate on $\tilde\Sigma=\bP^1$.
	
	Introduce a new variable $z$ related to $w$ by $w=\sqrt{\epsilon/2}(z-z^{-1})$; 
	$z$ is a global rational coordinate on the double cover of $\tilde\Sigma=\mathbb{P}^1$ (which is $\mathbb{P}^1$ itself; this will be $\Sigma$ of~\eqref{eq:GZ2}). In the new coordinate we have locally near the point $w=0$, $z=1$:
	\begin{equation}
		\frac{2\arcsinh(w/\sqrt{2\epsilon})}{\sqrt{w^2+2\epsilon}} d\frac{w^2}{2} = 2\log(z)\, d \left(\sqrt{{\epsilon/2}}\left(z+\frac 1z\right) \right),
	\end{equation}
	where the latter expression is equal to $ydx$, with $x$ and $y$ of~\eqref{eq:GZ2}. 
	
	Now notice that for $\iota:z \mapsto -\frac 1z$ we have
	\begin{equation}
		\tilde B=\frac{dw_1dw_2}{(w_1-w_2)^2} = \frac{dz_1dz_2}{(z_1-z_2)^2} + \frac{d(\iota(z_1))dp_2}{(\iota(z_1)-z_2)^2}=B,
	\end{equation}
	hence the recursion on~\eqref{eq:SigmaKN} coincides locally with the recursion on~\eqref{eq:GZ2} whose data is taken locally near $z=1$. The latter recursion is called (according to the terminology of~\cite{GKL}) the reduced version of the full recursion~\eqref{eq:GZ2} and its differentials are tautologically equal to  $\tilde \omega^{(g)}_n$ . The full equivariant version of the latter reduced recursion has an extra critical point at $z=-1$ and by~\cite[Proposition 8.8]{GKL} we have $\omega^{(g)}_n = 2^{2g-2+n} \tilde \omega^{(g)}_n$.
\end{proof}

\subsection{Expansion of the differentials} In this Section, we give a proof of Theorem~\ref{thm:TR-spin}.
\begin{proof}[Proof of Theorem~\ref{thm:TR-spin}]	
Note that by Eynard's formula~\cite{Eynard-Intersection}, as presented in~\cite{KN-conjecture}, we have for $2g-2+n>0$
\begin{align}
	\tilde\omega^{(g)}_n (w_1,\dots,w_n) = (-1)^n \sum_{k_1,\dots,k_n=0}^\infty \int_{\oM_{g,n}} \sum_{d=0}^\infty \epsilon^d J_{2g-2+n-d} \prod_{i=1}^n \psi_i^{k_i} \prod_{i=1}^n \frac{(2k_i+1)!!}{w_i^{2k_i+2}} dw_i.
\end{align}
By Theorems~\ref{thm:main} and~\ref{thm:C=J}, we have
\begin{align}
	\tilde\omega^{(g)}_n (w_1,\dots,w_n) & = (-1)^{g+d+n+1} \sum_{k_1,\dots,k_n=0}^\infty \sum_{d=0}^\infty \epsilon^d \int_{p_*C_{g,n,d}} \prod_{i=1}^n \psi_i^{k_i} \prod_{i=1}^n \frac{(2k_i+1)!!}{w_i^{2k_i+2}} dw_i.
\end{align}
This gives the stationary sector of the ancestor spin Gromov-Witten theory of $\bP^1$. Passing to descendants in the stable range $2g-2+n>0$ is given by a linear change of variables as in~\eqref{eq:ansc-desc}, and it amounts to the re-expansion of $\tilde\omega^{(g)}_n (w_1,\dots,w_n)$ in $\sqrt{w^2+2\epsilon} = x(z)$ at the point $z=0$, due to the following easily verified identity:
\begin{equation}
\frac{(2k+1)!!}{x^{2k+2}}dx=\sum_{m=0}^\infty\frac{ Q^m}{m!}\frac{(2(k+m)+1)!!}{w^{2(k+m)+2}}dw,
\end{equation}
where we identify $Q=-\epsilon$. We obtain:
\begin{align}
	\tilde\omega^{(g)}_n (w_1,\dots,w_n) & = (-1)^{g+n+1} \sum_{k_1,\dots,k_n=0}^\infty \sum_{d=0}^\infty Q^d \int_{C_{g,n,d}} \prod_{i=1}^n \psi_i^{k_i} \prod_{i=1}^n \frac{(2k_i+1)!!}{x_i^{2k_i+2}} dx_i.
\end{align}
Note also that $\omega^{(g)}_n = 2^{2g-2+n} \tilde\omega^{(g)}_n$. Hence,
\begin{align}
	\omega^{(g)}_n (w_1,\dots,w_n) & = (-1)^{n} \sum_{k_1,\dots,k_n=0}^\infty \sum_{d=0}^\infty Q^d 2^{2g-2+n} (-1)^{g+1} \int_{C_{g,n,d}} \prod_{i=1}^n \psi_i^{k_i} \prod_{i=1}^n \frac{(2k_i+1)!!}{x_i^{2k_i+2}} dx_i.
\end{align}
Taking into account the extra sign $(-1)^{g+1}$ that was natural for us to include in the definition of the descendant invariants, we obtain the statement of Theorem~\ref{thm:TR-spin} in the stable range $2g-2+n>0$. It extends further to the unstable case $(0,2)$ by a straightforward computation that essentially repeats the proof of Proposition~\ref{prop:materialization}. Indeed, applying~\cite[Equation (A.6)]{Janda-P1} in our case, we get for $(g,n)=(0,2)$
\begin{align}
\sum_{k_1,k_2=0}^\infty \frac{(-1)^{k_1+k_2+1}}{\zeta_1^{k_1+1}\zeta_2^{k_2+1}}\langle  \tau_{k_1}\tau_{k_2}\rangle^{\bP^1,\cO(-1),{\rm dec}}_{0} = \frac{e^{\frac Q{\zeta_1} + \frac Q{\zeta_2}} -1}{\zeta_1+\zeta_2},
\end{align}
and then one can use this formula to check directly that
\begin{align}
	\frac{dw_1dw_2}{(w_1-w_2)^2} - \frac{dx_1dx_1}{(x_1-x_2)^2} = \sum_{k_1,k_2=0}^\infty \langle  \tau_{k_1}\tau_{k_2}\rangle^{\bP^1,\cO(-1),{\rm dec}}_{0}  \frac{(2k_1+1)!!}{x_1^{2k_1+2}}\frac{(2k_2+1)!!}{x_2^{2k_2+2}}  dx_1 dx_2
\end{align}
in expansion near $x=\infty$.
\end{proof}

\printbibliography

\end{document}